\newcommand{\bydef}{:=}
\newcommand{\tr}{\mathrm{tr}} 
\newcommand{\norm}{\mathrm{n}}  
\newcommand{\id}{\mathrm{id}}
\newcommand{\bi}{\mathbf{i}}
\newcommand{\bj}{\mathbf{j}}
\newcommand{\bk}{\mathbf{k}}
\newcommand{\bl}{\mathbf{l}}
\newcommand{\cC}{\mathcal{C}} 
\newcommand{\cU}{\mathcal{U}}
\newcommand{\cV}{\mathcal{V}}
\newcommand{\cK}{\mathcal{K}}
\newcommand{\cQ}{\mathcal{Q}}
\newcommand{\cS}{\mathcal{S}}
\newcommand{\cO}{\mathcal{O}}
\newcommand{\cA}{\mathcal{A}}
\newcommand{\cB}{\mathcal{B}}
\DeclareMathOperator{\CD}{\mathfrak{CD}}
\newcommand{\Cl}{\mathfrak{Cl}} 
\newcommand{\frg}{{\mathfrak g}}
\newcommand{\frsu}{{\mathfrak{su}}}
\newcommand{\frso}{{\mathfrak{so}}}
\newcommand{\tri}{\mathfrak{tri}}
\newcommand{\NN}{\mathbb{N}}
\newcommand{\RR}{\mathbb{R}}
\newcommand{\CC}{\mathbb{C}}
\newcommand{\HH}{\mathbb{H}}
\newcommand{\OO}{\mathbb{O}}
\newcommand{\PP}{\mathbb{P}}
\newcommand{\FF}{\mathbb{F}}
\newcommand{\KK}{\mathbb{K}}
\newcommand{\LL}{\mathbb{L}}
\newcommand{\chr}[1]{\mathrm{char}\,#1}
\DeclareMathOperator{\Mat}{\mathrm{Mat}}
\DeclareMathOperator{\End}{\mathrm{End}}
\DeclareMathOperator{\Aut}{\mathrm{Aut}}
\newcommand{\Ort}{\mathrm{O}}
\newcommand{\SOrt}{\mathrm{SO}}  
\newcommand{\PSOrt}{\mathrm{PSO}}  
\newcommand{\Spin}{\mathrm{Spin}}
\newcommand{\subo}{_{\bar 0}}
\newtheorem{theorem}{Theorem}[section]
\newtheorem{proposition}[theorem]{Proposition}
\newtheorem{lemma}[theorem]{Lemma}
\newtheorem{corollary}[theorem]{Corollary}
\theoremstyle{definition}
\newtheorem{definition}[theorem]{Definition}
\newtheorem{examples}[theorem]{Examples}
\theoremstyle{remark}
\newtheorem{remark}[theorem]{Remark}
\def\bigstrut{\vrule height 12pt width 0ptdepth 2pt}
\def\hregleta{\hrule height .5pt}
\def\hreglon{\hrule height1pt}
\def\vreglon{\vrule height 12pt width1pt depth 4pt}
\def\vregleta{\vrule width .5pt}
\def\hreglonfill{\leaders\hreglon\hfill}
\def\hregletafill{\leaders\hregleta\hfill}
\begin{document}

\title{Composition algebras}

\author{Alberto Elduque}
\address{Departamento de Matem\'{a}ticas
 e Instituto Universitario de Matem\'aticas y Aplicaciones,
 Universidad de Zaragoza, 50009 Zaragoza, Spain}
\email{elduque@unizar.es}
\thanks{Supported by grants MTM2017-83506-C2-1-P (AEI/FEDER, UE) and E22\_17R (Diputaci\'on General de Arag\'on)}

\subjclass[2010]{Primary 17A75}

\keywords{Quaternions, Octonions, Composition algebra, Hurwitz, symmetric, triality}


\begin{abstract}
This paper is devoted to survey composition algebras and some of their applications.

After overviewing the classical algebras of quaternions and octonions, both  unital composition algebras (or Hurwitz algebras) and symmetric composition algebras will be dealt with. Their main properties, as well as their classifications, will be reviewed. Algebraic triality, through the use of symmetric composition algebras, will be considered too.
\end{abstract}

\maketitle

Unital composition algebras are the analogues of the classical algebras of the real and complex numbers, quaternions and octonions, but the class of composition algebras have been recently enriched with new algebras, mainly the so called symmetric composition algebras, which play a nice role in understanding the triality phenomenon in dimension $8$. The goal of this paper is to survey the main definitions and results on these algebras.

Section \ref{se:intro} will review the discovery of the real algebra of quaternions by Hamilton, will survey some of the applications of quaternions to deal with rotations in three and four dimensional euclidean spaces, and will move to octonions, discovered shortly after Hamilton's breakthrough.

Section \ref{se:Hurwitz} will be devoted to the classification of the unital composition algebras, also termed Hurwitz algebras. This is achieved by means of the Cayley-Dickson doubling process, which mimics the way in which Graves and Cayley constructed the octonions by doubling the quaternions. Hurwitz itself, in 1898, proved that a positive definite quadratic form over the real numbers allows composition if and only if the dimension is restricted to $1,2,4$ or $8$. These are the possible dimensions of Hurwitz algebras over arbitrary fields, and of finite-dimensional non-unital composition algebras. The case of Hurwitz algebras with isotropic norm will be given some attention, as this is instrumental in defining Okubo algebras later on, although the model of Zorn's vector matrices will not be touched upon.

In Section \ref{se:symmetric}, symmetric composition algebras will be defined. In dimension $>1$ these are non-unital composition algebras, satisfying the extra condition of `associativity of the norm': $n(x*y,z)=n(x,y*z)$. The interest lies in dimension $8$, where these algebras split in two disjoint families: para-Hurwitz algebras and Okubo algebras. The existence of the latter Okubo algebras justifies the introduction of the symmetric composition algebras. 

Formulas for triality are simpler if one uses symmetric composition algebras, instead of the classical Hurwitz algebras. This will be the subject of Section \ref{se:triality}. Triality is a broad subject. In Projective Geometry, there is duality relating points and hyperplanes. Given a vector space of dimension $8$, endowed with a quadratic form $q$ with maximal Witt index, the quadric of isotropic vectors $Q=\{\FF v: v\in V,\ q(v)=0\}$ in projective space $\PP(V)$ contains points, lines, planes and `solids', and there are two kinds of `solids'. Geometric triality relates points and the two kinds of solids in a cyclic way. This goes back to Study \cite{Study} and Cartan \cite{Cartan}. Tits \cite{Tits} showed that there are two different types of
geometric trialities, one of them is  related to
octonions (or para-octonions) and the exceptional group $G_2$, while the other is
related to the classical groups of type $A_2$, unless the
characteristic is $3$. The algebras hidden behind this second type are the Okubo algebras.
From the algebraic point of view, triality relates the natural and spin representations of the spin group on an eight-dimensional quadratic space, that is the three irreducible representations corresponding to the outer vertices of the Dynkin diagram $D_4$. Here we will consider lightly the local version of triality, which gives a very symmetric construction of Freudenthal's Magic Square.

\section{Quaternions and Octonions}\label{se:intro}

It is safe to say that the history of composition algebras starts with the discovery of the real quaternions by Hamilton.


\subsection{Quaternions} \null\quad

Real and complex numbers are in the toolkit of any scientist. Complex numbers corresponds to vectors in a Euclidean plane, so that addition and multiplication by real scalars are the natural ones for vectors. The length (or modulus, or norm) of the product of two complex numbers is the product of the lengths of the factors. In this way, multiplication by a norm 1 complex numbers is an isometry, actually a rotation, of the plane, and this allows us to identify the group of rotations of the Euclidean plane, that is, the special orthogonal group $\SOrt_2(\RR)$, with the set of norm one complex numbers (the unit circle):
\[
\SOrt_2(\RR)\simeq\{z\in\CC : \lvert z\rvert=1\}\simeq S^1.
\]

In 1835, William Hamilton posed himself the problem of extending the domain of complex numbers to a system of numbers `of dimension $3$'. He tried to find a multiplication, analogous to the multiplication of complex numbers, but in dimension $3$, that should respect the `law of moduli':
\begin{equation}\label{eq:moduli}
\lvert z_1z_2\rvert =\lvert z_1\rvert\lvert z_2\rvert .
\end{equation}
That is, he tried to get a formula for
\[
(a+b\bi + c\bj)(a'+b'\bi+c'\bj)=??
\]
and assumed $\bi^2=\bj^2=-1$. The problem is how to define $\bi\bj$ and $\bj\bi$.

With some `modern' insight, it is easy to see that this impossible. A \emph{nonassociative} (i.e., not necessarily associative) \emph{algebra} over a field $\FF$ is just a vector space $\cA$ over $\FF$, endowed with a bilinear map (the multiplication) $m:\cA\times\cA\rightarrow \cA$, $(x,y)\mapsto xy$. We will refer to the algebra $(\cA,m)$ or simply $\cA$ if no confusion arises. The algebra $\cA$ is said to be a \emph{division algebra} if the left and right multiplications $L_x:y\mapsto xy$, $R_x:y\mapsto yx$, are bijective for any nonzero $x\in\cA$. The `law of moduli' forces the algebra sought for by Hamilton to be a division algebra. But this is impossible.

\begin{proposition}\label{pr:impossible}
There are no real division algebras of odd dimension $\geq 3$.
\end{proposition}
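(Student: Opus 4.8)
The plan is to argue by contradiction using a degree/parity argument on the sphere, exploiting the fact that left multiplication by a unit vector gives a continuous family of invertible linear maps. Suppose $\cA$ is a real division algebra with $\dim_\RR \cA = n$ odd, $n \geq 3$. Fix a nonzero element $e \in \cA$; since $R_e$ is bijective we may rescale and assume (after replacing the multiplication by $x \cdot y := xR_e^{-1}(y)$, which is still a division algebra multiplication) that $e$ is a right identity, or more simply just work with the original multiplication and keep track of $R_e$. The key object is the map sending a nonzero $x$ to the linear operator $L_x \in \mathrm{GL}(\cA) \simeq \mathrm{GL}_n(\RR)$. Since $\cA$ is a division algebra, $L_x$ is invertible for every $x \neq 0$, so $\det L_x \neq 0$; moreover $x \mapsto \det L_x$ is a homogeneous polynomial of degree $n$ in the coordinates of $x$, hence (as $n$ is odd) it is an odd function: $\det L_{-x} = -\det L_x$.

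Next I would restrict attention to the unit sphere $S^{n-1} \subset \cA$. On the connected set $S^{n-1}$ (here $n \geq 2$ is essential so the sphere is connected) the continuous nowhere-zero function $x \mapsto \det L_x$ has constant sign. But $-x \in S^{n-1}$ whenever $x \in S^{n-1}$, and $\det L_{-x} = (-1)^n \det L_x = -\det L_x$ since $n$ is odd. This forces $\det L_x$ to take both signs on $S^{n-1}$, contradicting constancy of sign. Therefore no such $\cA$ exists. The same argument applies verbatim with $R_x$ in place of $L_x$.

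The main obstacle — or rather the point requiring care — is the reduction that makes the degree count clean: one must observe that $\det L_x$ is genuinely a polynomial map $\cA \to \RR$ homogeneous of degree $n = \dim \cA$, which is immediate once a basis is chosen, since each entry of the matrix of $L_x$ is linear in $x$ and the determinant of an $n \times n$ matrix is a degree-$n$ polynomial in its entries. Homogeneity of odd degree then gives the sign flip under $x \mapsto -x$. The only hypothesis actually used about division algebras is that $L_x$ is invertible for all $x \neq 0$, so the statement holds for the left-division property alone; associativity, a unit, and the existence of a norm play no role. Note also why the bound $\dim \geq 3$ (rather than $\geq 2$) is sharp here only through oddness: $\CC$ shows even dimension $2$ admits a division algebra, and $\RR$ itself is the excluded dimension $1$; the argument breaks for $n$ even precisely because $(-1)^n = 1$.
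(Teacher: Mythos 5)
Your argument is correct and rests on the same key observation as the paper's proof, namely that $x\mapsto\det L_x$ is a polynomial of odd degree $n$ which is nonvanishing on nonzero elements; the paper restricts it to a line $t\mapsto\det(L_x+tL_y)$ and invokes the existence of a real root of an odd-degree polynomial, whereas you use homogeneity plus connectedness of $S^{n-1}$ to get a sign contradiction. The two are only cosmetically different (both reduce to the intermediate value theorem), and the preliminary remarks about a right identity are unnecessary, as you yourself note.
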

\begin{proof} (See \cite{PeterssonEMS})
Given a real algebra $\cA$ of odd dimension $\geq 3$, and linearly independent elements $x,y\in\cA$ with $L_x$ and $L_y$ bijective, $\det(L_x+tL_y)$ is a polynomial in $t$ of odd degree, and hence it has a real root $\lambda$. Thus the left multiplication $L_{x+\lambda y}$ is not bijective.
\end{proof}

Therefore, Hamilton could not succeed. But, after years of struggle, he found how to overcome the difficulties in dimension $3$ by making a leap to dimension $4$.

In a letter to his son Archibald, dated August 5, 1865, Hamilton explained how vividly he remembered the date: \textbf{October 16, 1843}, when he got the key idea. He wrote:
\begin{quotation}
Every morning in the early part of the above-cited month, on my coming down to breakfast, your (then) little brother William Edwin, and yourself, used to ask me, {``Well, Papa, can you multiply triplets''?} Whereto I was always obliged to reply, with a sad shake of the head: {``No, I can only add and subtract them.''}

But on the 16th day of the same month - which happened to be a Monday, and a Council day of the Royal Irish Academy - I was walking in to attend and preside, and your mother was walking with me, along the Royal Canal, to which she had perhaps driven; and although she talked with me now and then, yet an under-current of thought was going on in my mind, which gave at last a result, whereof it is not too much to say that I felt at once the importance. {An electric circuit seemed to close; and a spark flashed forth,} the herald (as I foresaw, immediately) of many long years to come of definitely directed thought and work, by myself if spared, and at all events on the part of others, if I should even be allowed to live long enough distinctly to communicate the discovery.

Nor could I resist the impulse -unphilosophical as it may have been- to cut with a knife on a stone of Brougham Bridge, as we passed it, the fundamental formula with the symbols, $\bi$, $\bj$, $\bk$; namely,
\[
\bi^2 = \bj^2 = \bk^2 = \bi\bj\bk = -1
\]
which contains the Solution of the Problem, but of course, as an inscription, has long since mouldered away. A more durable notice remains, however, on the Council Books of the Academy for that day (October 16th, 1843), which records the fact, that I then asked for and obtained leave to read a Paper on Quaternions, at the First General Meeting of the session: which reading took place accordingly, on Monday the 13th of the November following.
\end{quotation}

Hamilton realized that the product $\bi\bj$ had to be linearly independent of $1,\bi,\bj$, and hence he defined the real algebra of \emph{quaternions} as:
\[
\HH=\RR 1\oplus\RR \bi\oplus\RR\bj\oplus\RR\bk
\]
with multiplication determined by 
\begin{gather*}
 \bi^2=\bj^2=\bk^2=-1,\\
 \bi\bj=-\bj\bi=\bk,\quad \bj\bk=-\bk\bj=\bi,\quad \bk\bi=-\bi\bk=\bj.
\end{gather*}

Some properties of this algebra are summarized here:

\begin{itemize}
\item For any $q_1,q_2\in \HH$, $\lvert q_1q_2\rvert=\lvert q_1\rvert\lvert q_2\rvert$ $\forall q_1,q_2\in\HH$, where for $q=a+b\bi+c\bj+c\bk$, $\lvert a+b\bi+c\bj+d\bk\rvert^2=a^2+b^2+c^2+d^2$ is the standard Euclidean norm on $\HH\simeq \RR^4$.

\item $\HH$ is an associative division algebra, but we loose the commutativity valid for the real and complex numbers.

Therefore $S^3\simeq\{q\in\HH: \lvert q\rvert=1\}$ is a (Lie) group.
This implies the parallelizability of the three-dimensional sphere $S^3$.

\item $\HH$ splits as $\HH=\RR 1\oplus \HH_0$, where $\HH_0=\RR \bi\oplus\RR \bj\oplus\RR \bk$ is the subspace of \emph{purely imaginary quaternions}.

Then, for any $u,v\in\HH_0$:
   
\[
uv=-u\bullet v+u\times v
\]
where $u\bullet v$ and $u\times v$ denote the usual scalar and cross products in $\RR^3\simeq\HH_0$.

\item For any $q=a1+u\in \HH$ ($u\in\HH_0$), $q^2=(a^2-u\bullet u)+2au$, so
\begin{equation}\label{eq:H_quadratic}
q^2-\tr(q)q+\norm(q)1=0
\end{equation}
with $\tr(q)=2a$ and $\norm(q)=\lvert q\rvert^2=a^2+u\bullet u$. That is, $\HH$ is a \emph{quadratic algebra}.

\item The map $q=a+u\mapsto \overline{q}=a-u$ is an involution, with $q+\overline{q}=\tr(q)=2a\in\RR$ and
 $q\overline{q}=\overline{q} q=\norm(q)=\lvert q\rvert^2\in\RR$.

\item $\HH=\CC\oplus \CC \bj\,\simeq \CC^2$ is a two-dimensional vector space over $\CC$. The 
multiplication is then given by:
\begin{equation}\label{eq:HasCDC}
(p_1+p_2\bj)(q_1+q_2\bj)=(p_1q_1-\overline{q_2}p_2)+(q_2p_1+p_2\overline{q_1})\bj
\end{equation}
for any $p_1,p_2,q_1,q_2\in\CC$.

\end{itemize}


\subsection{Rotations in three (and four) dimensional space} \null\quad

Given a norm $1$ quaternion $q$, there is an angle $\alpha\in [0,\pi]$ and a norm $1$ imaginary quaternion such that $q=(\cos\alpha)1+(\sin\alpha)u$.

Consider the linear map:
\[
\begin{split}
\varphi_q:\HH_0&\longrightarrow \HH_0,\\
x&\mapsto qxq^{-1}=qx\overline{q}.
\end{split}
\]
Complete $u$ to an orthonormal basis $\{u,v,u\times v\}$. A simple computation gives:
\[
\begin{aligned}
\varphi_q(u)&=quq^{-1}=u\qquad\text{(as $uq=qu$),}\\[2pt]
\varphi_q(v)&=\bigl((\cos\alpha)1+(\sin\alpha)u\bigr)v((\cos\alpha)1-(\sin\alpha)u)\\
 &=\bigl((\cos\alpha)v+(\sin\alpha)u\times v\bigr)((\cos\alpha)1-(\sin\alpha)u)\\
 &=(\cos^2\alpha)v+2(\cos\alpha\sin\alpha)u\times v-(\sin^2\alpha)(u\times v)\times u\\
 &=(\cos 2\alpha)v+(\sin 2\alpha)u\times v,\\[2pt]
\varphi_q(u\times v)&=. . .=-(\sin 2\alpha)v+(\cos 2\alpha)u\times v.
\end{aligned}
\]
Thus the coordinate matrix of  $\varphi_q$ relative to the basis $\{u,v,u\times v\}$ is
\[
\begin{pmatrix} 1&0&0\\ 0&\cos 2\alpha&-\sin 2\alpha\\ 0&\sin 2\alpha&\cos 2\alpha \end{pmatrix}
\]

In other words, $\varphi_q$ is a rotation around the semi axis $\RR^+u$ of angle $2\alpha$, and hence the map
\[
\begin{split}
\varphi: S^3\simeq \{q\in\HH: \lvert q\rvert =1\}&\longrightarrow \SOrt_3(\RR),\\
  q\quad&\mapsto\quad \varphi_q
\end{split}
\]
is a surjective (Lie) group homomorphism with $\ker\varphi=\{\pm 1\}$. We thus obtain the isomorphism 
\[
S^3/_{\{\pm 1\}}\simeq \SOrt_3(\RR)
\]
Actually, the group $S^3$ is the universal cover of $\SOrt_3(\RR)$.

Therefore, we get that rotations can be identified with conjugation by norm $1$ quaternions modulo $\pm 1$. The outcome is that it is quite easy now to compose rotations in three-dimensional Euclidean space, as it is enough to multiply norm $1$ quaternions: $\varphi_p\varphi_q=\varphi_{pq}$. From here one can deduce very easily the 1840 formulas by Olinde Rodrigues \cite{Rodrigues} for the composition of rotations.

\smallskip

But there is more about rotations and quaternions. 

For any $p\in \HH$ with $\norm(p)=1$, 
the left (resp. right) multiplication $L_p$ (resp. $R_p$) by $p$ is an isometry, due to the multiplicativity of the norm. Using \eqref{eq:H_quadratic} it follows that the characteristic polynomial of $L_p$ and $R_p$ is $\left(x^2-\tr(p)x+1\right)^2$ and, in particular, the determinant of the multiplication by $p$ is $1$, so both $L_p$ and $R_p$ are rotations.

Now, if $\psi$ is a rotation in  $\RR^4\simeq\HH$, $a=\psi(1)$ is a norm $1$ quaternion, and
\[
L_{\overline{a}}\psi(1)=\overline{a} a=\norm(a)=1,
\]
so the composition $L_{\overline{a}}\psi$ is actually a rotation in $\RR^3\simeq \HH_0$.
Hence, there is a norm $1$ quaternion $q\in\HH$ such that
\[
\overline{a}\psi(x)=qxq^{-1}
\]
for any $x\in\HH$. That is, for any $x\in \HH$,
\[
\psi(x)=(aq)xq^{-1}
\]
It follows that the map
\[
\begin{split}
 \Psi: S^3\times S^3&\longrightarrow \SOrt_4(\RR),\\
   (p,q)&\mapsto \psi_{p,q} : x\mapsto pxq^{-1}
\end{split}
\]
is a surjective (Lie) group homomorphism with $\ker\Psi=\{\pm (1,1)\}$. We thus obtain the isomorphism
\[
S^3\times S^3/_{\{\pm (1,1)\}}\simeq \SOrt_4(\RR)
\]
and from here we get the isomorphism $\SOrt_3(\RR)\times \SOrt_3(\RR)\simeq \PSOrt_4(\RR)$.

Again, this means that it is 
 quite easy to compose rotations in four-dimensional space, as 
it reduces to multiplying pairs of norm $1$ quaternions: $\psi_{p_1,q_1}\psi_{p_2,q_2}=\psi_{p_1p_2,q_1q_2}$.


\subsection{Octonions} \null\quad

In a letter from Graves to Hamilton, dated October 26, 1843, only a few days after the `discovery' of quaternions, Graves writes:

\begin{quotation}
There is still something in the system which gravels me. I have not yet any clear views as to the extent to which we are at liberty arbitrarily to create imaginaries, and to endow them with supernatural properties.

{If with your alchemy you can make three pounds of gold, why should you stop there?}
\end{quotation}

Actually, as we have seen in \eqref{eq:HasCDC}, the algebra of quaternions is obtained 
 by doubling suitably the field of complex numbers:
$
\HH=\CC\oplus\CC \bj.
$

Doubling again we get the \emph{octonions} (Graves--Cayley):
\[
\OO=\HH\oplus\HH \bl=\textrm{span}_{\RR}\left\{1,\bi,\bj,\bk,\bl,\bi\bl,\bj\bl,\bk\bl\right\}
\]
with multiplication mimicked from \eqref{eq:H_quadratic}:
\[
(p_1+p_2\bl)(q_1+q_2\bl)=(p_1q_1-\overline{q_2}p_2)+(q_2p_1+p_2\overline{q_1})\bl
\]
 and usual norm:
$
\norm(p_1+p_2\bl)=\norm(p_1)+\norm(p_2)
$,
for $p_1,p_2,q_1,q_2\in\HH$.

This was already known to Graves, who wrote a letter to Hamilton on December 26, 1843 with his discovery of what he called \emph{octaves}. Hamilton promised to announce Graves' discovery to the Irish Royal Academy, but did not do it in time. In 1845, independently, Cayley discovered the octonions and got the credit. Octonions are also called \emph{Cayley numbers}.

\smallskip

Some properties of this new algebra of octonions are summarized here:

\begin{itemize}

\item The norm is multiplicative: $\norm(xy)=\norm(x)\norm(y)$, for any $x,y\in\OO$.

\item $\OO$ is a division algebra, and it is neither commutative nor associative!

But it is \emph{alternative}, that is, any two elements generate an associative subalgebra.

A Theorem by Zorn \cite{Zorn} asserts that
the only finite-dimensional real alternative division algebras are  $\RR$, $\CC$, $\HH$ and $\OO$.
And hence, as proved by Frobenius \cite{Frobenius}, the only such associative algebras are $\RR$, $\CC$ and $\HH$.

\item The seven-dimensional Euclidean spehre $S^7\simeq \{ x\in \OO: \norm(x) =1\}$ is not a group (associativity fails), but it constitutes the most important example of a  \emph{Moufang loop}.

\item As for $\HH$, for any two \emph{imaginary octonions} $u,v\in\OO_0=
\textrm{span}_\RR\left\{\bi,\bj,\bk,\bl,\bi\bl,\bj\bl,\bk\bl\right\}$ we have:
\[
uv=-u\bullet v+u\times v.
\]
for the usual scalar product $u\bullet v$ on $\RR^7\simeq\OO_0$, and where $u\times v$ defines the usual cross product in $\RR^7$. This satisfies the identity $(u\times v)\times v=(u\bullet v)v-(v\bullet v)u$, for any $u,v\in\RR^7$.

\item $\OO$ is again a \emph{quadratic algebra}:  $x^2-\tr(x)x+\norm(x)1=0$ for any $x\in \OO$, where $\tr(x)=x+\overline{x}$ and $\norm(x)=x\overline{x}=\overline{x} x$, where for $x=a1+u$, $a\in\RR$, $u\in\OO_0$, $\overline{x}=a1-u$.
\end{itemize}

\medskip

And, as it happens for quaternions, octonions are also present in many interesting geometrical situations, here we mention just a few:

\begin{itemize}
\item The groups $\Spin_7$ and $\Spin_8$ (universal covers of $\SOrt_7(\RR)$ and $\SOrt_8(\RR)$) can be described easily in terms of octonions.

\item The fact that $\OO$ is a division algebra implies the parallelizability of the seven-dimensional sphere $S^7$. Actually,
$S^1$, $S^3$ and $S^7$ are the only parallelizable spheres \cite{Adams,BottMilnor,Kervaire}.

\item
The six-dimensional sphere can be identified with the set of norm $1$ imaginary units: $S^6\simeq\{x\in\OO_0: \norm(x) =1\}$, and it is endowed with an \emph{almost complex structure}, inherited from the multiplication of octonions.

$S^2$ and $S^6$ are the only spheres with such structures \cite{BorelSerre}.

\item Contrary to what happens in higher dimensions, projective planes need not be desarguesian. The simplest example of a non-desarguesian projective plane is the octonionic projective plane $\OO P^2$.
\end{itemize}

\bigskip

A lot of material on the work of Hamilton can be found at the URL: 
\begin{center}
\texttt{https://www.maths.tcd.ie/pub/HistMath/People/Hamilton/}
\end{center} 
by David R.~Wilkins. The interested reader may consult \cite{Numbers} or \cite{ConwaySmith} for complete expositions on quaternions and octonions.

\bigskip

\section{Unital composition algebras}\label{se:Hurwitz}

Composition algebras constitute a generalization of the classical algebras of the real, complex, quaternion and octonion numbers.

A quadratic from $\norm:V\rightarrow \FF$ on a vector space $V$ over a field $\FF$ is said to be \emph{nondegenerate} if so is its polar form:
\[
\norm(x,y)\bydef \norm(x+y)-\norm(x)-\norm(y),
\]
that is, if its radical $V^\perp\bydef\{v\in V:\norm(v,V)=0\}$ is trivial.
Moreover, $\norm$ is said to be \emph{nonsingular} if either it is nondegenerate or it satisfies that the dimension of $V^\perp$ is $1$ and $\norm(V^\perp)\neq 0$. The last possibility only occurs over fields of characteristic $2$.

\begin{definition}\label{df:composition}
A \emph{composition algebra} over a field $\FF$ is a triple $(\cC,\cdot,\norm)$ where
\begin{itemize}
\item $(\cC,\cdot)$ is a nonassociative algebra, and
\item $\norm:\cC\rightarrow\FF$ is a nonsingular quadratic form which is \emph{multiplicative}, that is,
\begin{equation}\label{eq:nxy}
\norm(x\cdot y)=\norm(x)\norm(y)
\end{equation}
for any $x,y\in\cC$.
\end{itemize}
\end{definition}

The unital composition algebras are called \emph{Hurwitz algebras}.

For simplicity, we will refer usually to the composition algebra $\cC$.

Our goal in this section is to prove that Hurwitz algebras are quite close to $\RR$, $\CC$, $\HH$ and $\OO$.

By linearization of \eqref{eq:nxy} we obtain:
\begin{equation}\label{eq:nxy_linear}
\begin{split}
&\norm(x\cdot y,x\cdot z)=\norm(x)\norm(y,z),\\
&\norm(x\cdot y,t\cdot z)+\norm(t\cdot y,x\cdot  z)=\norm(x,t)\norm(y,z),
\end{split}
\end{equation}
for any $x,y,z,t\in\cC$.

\begin{proposition}\label{pr:Hurwitz_properties}
Let $(\cC,\cdot,\norm)$ be a Hurwitz algebra.
\begin{itemize}
\item Either $\norm$ is nondegenerate or $\chr\FF=2$ and $\cC$ is isomorphic to the ground field $\FF$ (with norm $\alpha\mapsto \alpha^2$).

\item The map $x\mapsto \overline{x}\bydef \norm(1,x)1-x$ is an involution. That is, $\overline{\overline{x}}=x$ and $\overline{x\cdot y}=\overline{y}\cdot \overline{x}$ for any $x,y\in\cC$. This involution is referred to as the \emph{standard conjugation}.

\item If $*$ denotes the conjugation of a linear endomorphism relative to $\norm$ (i.e., $\norm\bigl(f(x),y\bigr)=\norm\bigl(x,f^*(y)\bigr)$ for any $x,y$), then for the left and right multiplications by elements $x\in\cC$ we have $L_x^*=L_{\overline{x}}$ and $R_x^*=R_{\overline{x}}$.

\item Any $x\in\cC$ satisfies the \emph{Cayley-Hamilton equation}:
\[
x^{\cdot 2}-\norm(x,1)x+\norm(x)1=0.
\]

\item $(\cC,\cdot)$ is an alternative algebra: $x\cdot(x\cdot y)=x^{\cdot 2}\cdot y$ and $(y\cdot x)\cdot x=y\cdot x^{\cdot 2}$ for any $x,y\in\cC$.
\end{itemize}
\end{proposition}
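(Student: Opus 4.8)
The plan is to establish the five bullet points roughly in the order listed, since each one feeds into the next. Throughout, the main tool is the linearized multiplicativity \eqref{eq:nxy_linear} together with the presence of the unit $1$.

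\emph{First bullet.} Suppose $\norm$ is degenerate, so its radical $\cC^\perp$ is nonzero. Since $\norm$ is nonsingular, this forces $\chr\FF=2$, $\dim\cC^\perp=1$, and $\norm(\cC^\perp)\neq 0$. Pick $0\neq z\in\cC^\perp$; then $\norm(z)\neq 0$. The idea is to show $z$ is invertible and that left multiplication by a radical element is, up to scalar, the conjugation map, forcing $\cC$ to be one-dimensional. Concretely, from the first identity in \eqref{eq:nxy_linear} with $x=z$ one gets $\norm(z\cdot y, z\cdot t)=\norm(z)\norm(y,t)$, so $L_z$ is a similarity and hence bijective (as $\norm(z)\neq0$); but also, plugging $z$ into the second identity shows $z\cdot\cC\subseteq$ a subspace forced to be small, and one deduces $\cC=\FF 1$ with $1=z$ up to scalar. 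I would model this on the standard argument: if $v\in\cC^\perp$ then $\norm(1,v)1 = \overline{v}+v$ pairs trivially against everything, and a short computation with \eqref{eq:nxy_linear} shows $\cC=\FF 1$.

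\emph{Second through fourth bullets.} Define $\overline{x}\bydef\norm(1,x)1-x$. The involution property $\overline{\overline{x}}=x$ is immediate from $\norm(1,1)=2\norm(1)=2$ (and in characteristic $2$ it still works out since $\norm(1,1)\cdot 1=0$ forces care, but $\norm(1)=1$ using $\norm(1)^2=\norm(1\cdot1)=\norm(1)\norm(1)$ and nonsingularity gives $\norm(1)=1$). For $L_x^*=L_{\overline{x}}$: from the first line of \eqref{eq:nxy_linear}, $\norm(x\cdot y,z)=\norm(x\cdot y, x\cdot((\,\cdot\,)))$-type manipulations, but the cleanest route is to polarize \eqref{eq:nxy} in the form $\norm(x\cdot y, x\cdot z)=\norm(x)\norm(y,z)$ and then use $\norm(x)=\norm(1,x)\norm(1)-\cdots$; more directly, set $z=1$ in the second identity of \eqref{eq:nxy_linear} to get $\norm(x\cdot y, t)+\norm(t\cdot y, x)=\norm(x,t)\norm(y,1)$, i.e. $\norm(x\cdot y,t)=\norm(y,\overline{x}\cdot t)$, which is exactly $L_x^*=L_{\overline{x}}$; the right-handed version is symmetric. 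Once $L_x^*=L_{\overline{x}}$ is in hand, the Cayley--Hamilton equation follows by applying both sides to $1$ cleverly, or: $x\cdot\overline{x}=x\cdot(\norm(1,x)1-x)=\norm(1,x)x - x^{\cdot2}$, and separately $\norm(x\cdot\overline{x},1)=\norm(x,x)=2\norm(x)$ together with $\norm(x\cdot\overline{x},y)=\norm(\overline{x},\overline{x}\cdot y)=\cdots$ pins down $x\cdot\overline{x}=\norm(x)1$, giving $x^{\cdot2}-\norm(1,x)x+\norm(x)1=0$. Then $\overline{x\cdot y}=\overline{y}\cdot\overline{x}$ drops out by comparing $L_{\overline{x\cdot y}}$ with $L_{\overline{y}}L_{\overline{x}} = R$-adjoints via the star identities, or by linearizing the quadratic equation.

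\emph{Fifth bullet.} Alternativity is the payoff: from the Cayley--Hamilton equation, $x^{\cdot2}=\norm(1,x)x-\norm(x)1$, so $x^{\cdot2}\cdot y = \norm(1,x)(x\cdot y) - \norm(x)y$. On the other hand $x\cdot(x\cdot y)$ should equal the same thing; to see it, apply the already-established identity $x\cdot\overline{x}=\norm(x)1=\overline{x}\cdot x$ and substitute $\overline{x}=\norm(1,x)1-x$, giving $x\cdot x + $ (lower terms) in the right configuration. The right-alternative law is symmetric. The main obstacle I anticipate is the \emph{characteristic $2$ bookkeeping} in the first bullet and in verifying $\overline{\overline{x}}=x$ and $\norm(1)=1$: the factor of $2$ in $\norm(1,1)$ vanishes, so one must argue via nonsingularity rather than just dividing, and one must be careful that $\overline{\phantom{x}}$ is still an involution (not merely that $\overline{x}=x$ trivially when $\norm(1,x)=0$). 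Everything else is a disciplined unwinding of \eqref{eq:nxy_linear} evaluated at $1$, which is routine once the adjoint identities $L_x^*=L_{\overline{x}}$, $R_x^*=R_{\overline{x}}$ are secured.
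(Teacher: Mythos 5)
Your overall route is the paper's route: specialize the linearized composition identity \eqref{eq:nxy_linear} at the unit to get the adjoint identities $L_x^*=L_{\overline{x}}$, $R_x^*=R_{\overline{x}}$, use those to handle the degenerate case and the anti-automorphism property, derive Cayley--Hamilton, and then alternativity. Two of your steps need repair, one seriously.

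The genuine gap is in the fifth bullet. To get $x\cdot(x\cdot y)=x^{\cdot 2}\cdot y$ from Cayley--Hamilton you need the \emph{operator} identity $\overline{x}\cdot(x\cdot y)=\norm(x)y$ (and its mirror $(y\cdot x)\cdot\overline{x}=\norm(x)y$), not merely the \emph{element} identity $\overline{x}\cdot x=\norm(x)1$. Your plan --- ``apply the already-established identity $x\cdot\overline{x}=\norm(x)1=\overline{x}\cdot x$ and substitute $\overline{x}=\norm(1,x)1-x$'' --- silently replaces $\overline{x}\cdot(x\cdot y)$ by $(\overline{x}\cdot x)\cdot y$, which is exactly the associativity you are not entitled to assume; this is the one place in the proposition where nonassociativity actually bites. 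The missing (short) argument is: $\norm\bigl(\overline{x}\cdot(x\cdot y),z\bigr)=\norm(x\cdot y,x\cdot z)=\norm(x)\norm(y,z)$ by $L_{\overline{x}}^*=L_x$ and the first line of \eqref{eq:nxy_linear}, so nondegeneracy gives $\overline{x}\cdot(x\cdot y)=\norm(x)y$ (this is \eqref{eq:barxxy} in the paper); then expanding $\overline{x}$ yields left alternativity, and the right-handed version is symmetric. Incidentally, this identity also gives Cayley--Hamilton for free at $y=1$, which is how the paper proceeds; your separate derivation of $x\cdot\overline{x}=\norm(x)1$ via $\norm(x\cdot\overline{x},y)=\norm(\overline{x},\overline{x}\cdot y)=\norm(\overline{x})\norm(1,y)$ is also correct, it just doesn't substitute for the operator identity.

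Two smaller points. First, setting $z=1$ in the second line of \eqref{eq:nxy_linear} gives $\norm(x\cdot y,t)+\norm(t\cdot y,x)=\norm(x,t)\norm(y,1)$, which is $R_y^*=R_{\overline{y}}$, not $\norm(x\cdot y,t)=\norm(y,\overline{x}\cdot t)$; for the latter you must set $t=1$. Harmless, since you need both identities anyway, but as written the ``i.e.'' does not follow. Second, your first bullet is only a gesture: the concrete argument is that for $a$ spanning the radical, $\norm(a\cdot x,y)=\norm(a,y\cdot\overline{x})=0$ forces $a\cdot x=f(x)a$ for a linear functional $f$, whence $\norm(x)=f(x)^2$ and, linearizing, $\norm(x,y)=2f(x)f(y)=0$, so $\cC=\cC^\perp$ is one-dimensional. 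Your ``$L_a$ bijective plus $a\cdot\cC\subseteq\cC^\perp$ of dimension one'' observation does close this too, but you should say so rather than appeal to ``a short computation.''
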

\begin{proof}
Plug $t=1$ in \eqref{eq:nxy_linear} to get
\[
\norm(x\cdot y,z)=\norm\bigl(y,(\norm(x,1)1-x)z\bigr)=\norm(y,\overline{x}\cdot z)
\]
and symmetrically we get $\norm(y\cdot x,z)=\norm(y,z\cdot\overline{x})$.

Now, if $\chr\FF=2$ and $\cC^\perp=\FF a$, with $\norm(a)\neq 0$, then for any $x,y\in\cC$, $\norm(a\cdot x,y)=n(a,y\cdot \overline{x})=0$, so $a\cdot x\in\cC^\perp$ and $a\cdot x=f(x)a$ for a linear map $f:\cC\rightarrow \FF$. But $\norm(a)\norm(x)=\norm(a\cdot x)=f(x)^2\norm(a)$. Hence $\norm(x)=f(x)^2$ for any $x$ and, by linearization, $\norm(x,y)=2f(x)f(y)=0$ for any $x,y$. We conclude that $\cC=\cC^\perp=\FF 1$. In this case, all the assertions are trivial.

Assuming hence that $\cC^\perp=0$ ($\norm$ is nondegenerate), since $x\mapsto\overline{x}$ is an isometry of order $2$ (reflection relative to $\FF 1$) we get
\[
\norm(\overline{x\cdot y},z)=\norm(x\cdot y,\overline{z})=\norm(x,\overline{z}\cdot\overline{y})=
\norm(z\cdot x,\overline{y})=\norm(z,\overline{y}\cdot \overline{x})
\]
for any $x,y,z$, whence $\overline{x\cdot y}=\overline{y}\cdot\overline{x}$.

Finally, for any $x,y,z$, using again \eqref{eq:nxy_linear},
\[
\norm(x)\norm(y,z)=\norm(x\cdot y,x\cdot z)=\norm(\overline{x}\cdot(x\cdot y),z)
\]
so that, using left-right symmetry:
\begin{equation}\label{eq:barxxy}
\overline{x}\cdot(x\cdot y)=\norm(x)y =(y\cdot x)\cdot\overline{x}.
\end{equation} 
With $y=1$ this gives $\overline{x}\cdot x=\norm(x)1$ and hence the Cayley-Hamilton equation. On the other hand, $\overline{x}\cdot(x\cdot y)=\norm(x)y=(\overline{x}\cdot x)\cdot y$ and this shows $x\cdot (x\cdot y)=x^{\cdot 2}\cdot y$. Symmetrically we get $(y\cdot x)\cdot x=y\cdot x^{\cdot 2}$.
\end{proof}

\subsection{The Cayley-Dickson doubling process and the Generalized Hurwitz Theorem} \null\quad

Let $(\cC,\cdot,\norm)$ be a Hurwitz algebra, and assume that $\cQ$ is a proper unital subalgebra of $\cC$ such that the restriction of $\norm$ to $\cQ$ is nondegenerate. Our goal is to show that in this case $\cC$ contains also a subalgebra obtained `doubling' $\cQ$, in a way similar to the construction of $\HH$ from two copies of $\CC$, or the construction of $\OO$ from two copies of $\HH$.

By nondegeneracy of $\norm$, $\cC=\cQ\oplus \cQ^\perp$. Pick $u\in \cQ^\perp$ with $\norm(u)\neq 0$, and let $\alpha=-\norm(u)$. As $1\in\cQ$, $\norm(u,1)=0$ and hence $\overline{u}=-u$ and $u^{\cdot 2}=\alpha 1$ by the Cayley-Hamilton equation (Proposition \ref{pr:Hurwitz_properties}). This also implies that $R_u^2=\alpha\id$, so the right multiplication $R_u$ is bijective.

\begin{lemma}\label{le:Hurwitz}
Under the conditions above, the subspaces $\cQ$ and $\cQ\cdot u$ are orthogonal (i.e., $\cQ\cdot u\subseteq \cQ^\perp$), and the following properties hold for any $x,y\in \cQ$.
\begin{itemize}
\item $x\cdot u=u\cdot\overline{x}$,

\item $x\cdot (y\cdot u)=(y\cdot x)\cdot u$,

\item $(y\cdot u)\cdot x=(y\cdot\overline{x})\cdot u$,

\item $(x\cdot u)\cdot (y\cdot u)=\alpha\overline{y}\cdot x$.
\end{itemize}
\end{lemma}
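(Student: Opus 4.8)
The strategy is to extract each identity from the linearized multiplicativity relations \eqref{eq:nxy_linear} together with the properties already collected in Proposition \ref{pr:Hurwitz_properties}, exactly in the spirit of the proof of that proposition: test these bilinear identities against an arbitrary $z\in\cC$, use the nondegeneracy of $\norm$ to cancel $z$, and exploit $L_x^*=L_{\overline x}$, $R_x^*=R_{\overline x}$, $\overline u=-u$, $u^{\cdot 2}=\alpha 1$ and $R_u^2=\alpha\,\id$. Throughout, $x,y\in\cQ$; note that $\overline x,\overline y\in\cQ$ as well, since $\cQ$ is a unital subalgebra and conjugation is $z\mapsto \norm(1,z)1-z$.

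First I would establish orthogonality $\cQ\cdot u\subseteq\cQ^\perp$. For $x\in\cQ$ and $w\in\cQ$, compute $\norm(x\cdot u,w)=\norm(u,\overline x\cdot w)$ using $L_x^*=L_{\overline x}$; since $\overline x\cdot w\in\cQ$ and $u\in\cQ^\perp$, this vanishes, so $x\cdot u\in\cQ^\perp$. Next, the identity $x\cdot u=u\cdot\overline x$: write $x\cdot u+u\cdot x=\norm(x,1)(1\cdot u)$? — more cleanly, apply the second relation in \eqref{eq:nxy_linear} with the substitution $(x,y,t,z)\mapsto(x,u,1,u)$, or simply use that $\overline{x\cdot u}=\overline u\cdot\overline x=-u\cdot\overline x$ while also $\overline{x\cdot u}=\norm(x\cdot u,1)1-x\cdot u=-x\cdot u$ because $x\cdot u\in\cQ^\perp$ and $1\in\cQ$; equating gives $x\cdot u=u\cdot\overline x$. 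For the second bullet, $x\cdot(y\cdot u)=(y\cdot x)\cdot u$, I would pair both sides with an arbitrary $z\in\cC$ and push multiplications across using the conjugation rules: $\norm\bigl(x\cdot(y\cdot u),z\bigr)=\norm\bigl(y\cdot u,\overline x\cdot z\bigr)=\norm\bigl(u,\overline y\cdot(\overline x\cdot z)\bigr)$, and on the other side $\norm\bigl((y\cdot x)\cdot u,z\bigr)=\norm\bigl(u,\overline{y\cdot x}\cdot z\bigr)=\norm\bigl(u,(\overline x\cdot\overline y)\cdot z\bigr)$; so it suffices to show $\overline y\cdot(\overline x\cdot z)-(\overline x\cdot\overline y)\cdot z\in\cQ^\perp$ for all $z$, and since $\overline y,\overline x\in\cQ$ this is an associator lying inside a subalgebra only when $z\in\cQ$ — so I instead split $z=q+q'\cdot u$ with $q,q'\in\cQ$ and reduce, using alternativity and the already-proved bullets, to the case $z\in\cQ$ (handled by associativity of $\cQ$? — $\cQ$ need not be associative, but it is alternative and, being $\le 4$-dimensional in the relevant case, this is where a short separate argument enters) and the case $z=q'\cdot u$. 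The third bullet $(y\cdot u)\cdot x=(y\cdot\overline x)\cdot u$ follows from the second by applying conjugation, or by the symmetric (right-multiplication) version of the same pairing argument. Finally, $(x\cdot u)\cdot(y\cdot u)=\alpha\,\overline y\cdot x$: using the first bullet rewrite $x\cdot u=u\cdot\overline x$? — better, $(x\cdot u)\cdot(y\cdot u)=\bigl((x\cdot u)\cdot y\bigr)\cdot u$ is false in general, so instead pair with $z$ and move the two $u$'s to one side via $R_u^*=R_{\overline u}=-R_u$ and $R_u^2=\alpha\,\id$, then collapse with the third bullet; concretely $(x\cdot u)\cdot(y\cdot u)=\bigl((x\cdot u)\cdot\overline{\,y\,}\,\bigr)$-type manipulation, ending with the factor $\alpha=-\norm(u)=u^{\cdot 2}$ extracted from $u\cdot u$.

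The main obstacle will be the reduction arguments in the second, third and fourth bullets: each identity is bilinear in its free variable $z$, but the natural manipulations only directly control $z\in\cQ$, so one must decompose $z=q+q'\cdot u$ and feed the partially-proved identities back in, taking care that the decomposition $\cC=\cQ\oplus\cQ^\perp=\cQ\oplus\cQ\cdot u$ is available — which requires first checking that $R_u:\cQ\to\cQ\cdot u$ is a bijection onto $\cQ^\perp$ (it is injective since $R_u^2=\alpha\,\id$ with $\alpha\ne 0$, and $\cQ\cdot u\subseteq\cQ^\perp$ with $\dim\cQ\cdot u=\dim\cQ=\dim\cQ^\perp$). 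A secondary subtlety is that $\cQ$ itself is only alternative, not associative, when $\dim\cQ=4$; the identities $x\cdot(y\cdot z)=(y\cdot x)\cdot z$ restricted to $\cQ$ hold because in a Hurwitz algebra the conjugation is an involution and $x\cdot(y\cdot z)+\ldots$ — in practice one invokes the alternative-algebra identities (Moufang-type) already implied by Proposition \ref{pr:Hurwitz_properties}, rather than associativity. Once the bookkeeping of the $\cQ\oplus\cQ\cdot u$ decomposition is set up, every bullet is a two- or three-line computation.
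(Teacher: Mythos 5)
Your treatment of the orthogonality claim and of the first bullet is correct and coincides with the paper's argument, and reducing the third bullet to the second via conjugation is also valid. But the second and fourth bullets --- the heart of the lemma --- are not actually proved. For the second, your pairing argument reduces the claim to $\norm\bigl(u,\ \overline y\cdot(\overline x\cdot z)-(\overline x\cdot\overline y)\cdot z\bigr)=0$ for all $z\in\cC$; note that what you need is the expression to be orthogonal to $u$ (for instance, to lie in $\cQ$), not to lie in $\cQ^\perp$ as you wrote, since $\cQ^\perp$ is not totally isotropic. For $z\in\cQ$ this is immediate because the expression stays inside the subalgebra $\cQ$ (no associativity of $\cQ$ is needed here, contrary to your worry), but for $z\in\cQ\cdot u$ you cannot evaluate $\overline x\cdot(q'\cdot u)$ without invoking the very identity being proved: the plan of splitting $z=q+q'\cdot u$ and ``feeding the partially-proved identities back in'' is circular. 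The fourth bullet is left as a ``-type manipulation'' with no identifiable chain of equalities.

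The missing ingredient is \eqref{eq:barxxy}, $\overline{x}\cdot(x\cdot y)=\norm(x)y=(y\cdot x)\cdot\overline{x}$, in its linearized form $\overline a\cdot(b\cdot w)+\overline b\cdot(a\cdot w)=\norm(a,b)\,w$. Applied with one of $a,b$ in $\cQ$ and the other in $\cQ^\perp$, the right-hand side vanishes and each remaining bullet becomes a two-line computation entirely inside $\cC$: for instance $x\cdot(y\cdot u)=x\cdot(u\cdot\overline y)=u\cdot(\overline x\cdot\overline y)=u\cdot\overline{y\cdot x}=(y\cdot x)\cdot u$ (take $a=\overline x$, $b=u$, $w=\overline y$), and $(x\cdot u)\cdot(y\cdot u)=\overline y\cdot\bigl((x\cdot u)\cdot u\bigr)=\overline y\cdot(x\cdot u^{\cdot 2})=\alpha\,\overline y\cdot x$ (take $a=x\cdot u$, $b=y$, $w=u$, together with $\overline{x\cdot u}=-x\cdot u$ and right alternativity). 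No decomposition of $\cC$ as $\cQ\oplus\cQ\cdot u$, no case analysis on $z$, and no hypothesis on $\cQ$ beyond its being a unital subalgebra with nondegenerate restricted norm are required.
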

\begin{proof}
For any $x,y\in\cQ$, $\norm(x,y\cdot u)=\norm(\overline{y}\cdot x,u)\in\norm(\cQ,u)=0$, so $\cQ\cdot u$ is a subspace orthogonal to $\cQ$.

From $\norm(x\cdot u,1)=\norm(u,\overline{x})\in\norm(u,\cQ)=0$, it follows that $\overline{x\cdot u}=-x\cdot u$. But $\overline{x\cdot u}=\overline{u}\cdot\overline{x}=-u\cdot\overline{x}$, whence $x\cdot u=u\cdot\overline{x}$.

Now $x\cdot (y\cdot u)=-x\cdot (\overline{y\cdot u})=-x\cdot (\overline{u}\cdot \overline{y})$ and this is equal, because of \eqref{eq:barxxy}, to $u\cdot(\overline{x}\cdot\overline{y})=u\cdot(\overline{y\cdot x})=(y\cdot x)\cdot u$.

In a similar vein $(y\cdot u)\cdot x=-(y\cdot\overline{u})\cdot x=(y\cdot\overline{x})\cdot u$, and $(x\cdot u)\cdot (y\cdot u)=-(\overline{x\cdot u})\cdot (y\cdot u)=\overline{y}\bigl((x\cdot u)\cdot u\bigr)=\overline{y}\cdot (x\cdot u^{\cdot 2})=\alpha\overline{y}\cdot x$.
\end{proof}

Therefore, the subspace $\cQ\oplus\cQ\cdot u$ is also a subalgebra, and the restriction of $\norm$ to it is nondegenerate. The multiplication and norm are given by (compare to \eqref{eq:HasCDC}):
\begin{equation}\label{eq:CD}
\begin{split}
&(a+b\cdot u)\cdot(c+d\cdot u)=
 (a\cdot c+\alpha\overline{d}\cdot b)+(d\cdot a+b\cdot\overline{c})\cdot u,\\
&\norm(a+b\cdot u)=\norm(a)-\alpha\norm(b),
\end{split}
\end{equation}
for any $a,b,c,d\in\cQ$.

Moreover, 
\[
\norm\bigl((a+b\cdot u)\cdot(c+d\cdot u)\bigr)=\norm(a\cdot c+\alpha\overline{d}\cdot b)-\alpha\norm(d\cdot a+b\cdot\overline{c}),
\] 
while on the other hand
\[
\begin{split}
\norm(a+b\cdot u)\norm(c+d\cdot u)
&=\bigl(\norm(a)-\alpha\norm(b)\bigr)\bigl(\norm(c)-\alpha\norm(d)\bigr)\\
&=\norm(a)\norm(c)+\alpha^2\norm(b)\norm(d)-\alpha\bigl(\norm(d)\norm(a)+\norm(b)\norm(c)\bigr)\\
&=\norm(a\cdot c)+\alpha^2\norm(\overline{d}\cdot b)-\alpha\bigl(\norm(d\cdot a)+\norm(b\cdot\overline{c})\bigr).
\end{split}
\]
We conclude that $\norm(a\cdot b,\alpha\overline{d}\cdot b)-\alpha\norm(d\cdot a,b\cdot\overline{c})=0$, or $\norm\bigl(d\cdot (a\cdot c),b\bigr)=\norm\bigl((d\cdot a)\cdot c,b\bigr)$.

The nondegeneracy of the restriction of $\norm$ to $\cQ$ implies then that $\cQ$ is associative! In particular, any proper subalgebra of $\cC$ with nondegenerate restricted norm is associative.

\smallskip

Conversely, given an \emph{associative} Hurwitz algebra $\cQ$ with nondegenerate $\norm$, and a nonzero scalar $\alpha\in\FF$, consider the direct sum of two copies of $\cQ$: $\cC=\cQ\oplus\cQ\cdot u$, with multiplication and norm given by \eqref{eq:CD}, extending those on $\cQ$. The arguments above show that $(\cC,\cdot,\norm)$ is again a Hurwitz algebra, which is said to be obtained by the \emph{Cayley-Dickson doubling process} from $(\cQ,\cdot,\norm)$ and $\alpha$. This algebra is denoted by $\CD(\cQ,\alpha)$.

\begin{remark}\label{re:associative_commutative}
$\CD(\cQ,\alpha)$ is associative if and only if $\cQ$ is commutative. This follows from $x\cdot (y\cdot u)=(y\cdot x)\cdot u$. If the algebra is associative this equals $(x\cdot y)\cdot u$, and it forces $x\cdot y=y\cdot x$ for any $x,y\in \cQ$. The converse is an easy exercise.
\end{remark}

We arrive at the main result of this section.

\begin{theorem}[Generalized Hurwitz Theorem]\label{th:Hurwitz}
Every Hurwitz algebra over a field $\FF$ is isomorphic to one of the following:
\begin{enumerate}
\item The ground field $\FF$.

\item A two-dimensional separable commutative and associative algebra: $\cK=\FF 1\oplus\FF v$, with $v^{\cdot 2}=v+\mu 1$, $\mu\in\FF$ with $4\mu+1\neq 0$, and $\norm(\epsilon+\delta v)=\epsilon^2-\mu\delta^2+2\epsilon\delta$, for $\epsilon,\delta\in\FF$.

\item A \emph{quaternion} algebra $\cQ=\CD(\cK,\beta)$ for $\cK$ as in \textup{(2)} and $0\neq \beta\in\FF$.

\item A \emph{Cayley} (or \emph{octonion}) algebra $\cC=\CD(\cQ,\gamma)$, for $\cQ$ as in \textup{(3)} and $0\neq \gamma\in\FF$.
\end{enumerate}
In particular, the dimension of a Hurwitz algebra is restricted to $1,2,4$ or $8$.
\end{theorem}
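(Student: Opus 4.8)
The plan is to proceed by induction on the dimension, using the Cayley--Dickson doubling process as the engine. By Proposition~\ref{pr:Hurwitz_properties}, either $\chr\FF=2$ and $\cC\cong\FF$ (case (1)), or $\norm$ is nondegenerate; assume the latter. The key mechanism is: whenever $\cC$ properly contains a unital subalgebra $\cQ$ on which $\norm$ is nondegenerate, the analysis preceding Lemma~\ref{le:Hurwitz} and the identity \eqref{eq:CD} show that $\cC$ contains a copy of $\CD(\cQ,\alpha)$ for some $0\neq\alpha\in\FF$, and moreover that $\cQ$ must be associative. So I would build an ascending chain of subalgebras $\FF 1\subsetneq\cK\subsetneq\cQ\subsetneq\cC$, doubling at each stage, and show the chain must stop at dimension $8$.

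First I would dispose of the base of the chain. If $\cC=\FF 1$ we are in case (1). Otherwise I need a two-dimensional unital subalgebra with nondegenerate norm: pick any $v\notin\FF 1$; by the Cayley--Hamilton equation $v^{\cdot 2}=\norm(v,1)v-\norm(v)1$, so $\FF 1\oplus\FF v$ is a subalgebra, and after rescaling and translating $v$ one arranges $v^{\cdot 2}=v+\mu 1$ with $\norm$ taking the stated form; nondegeneracy of $\norm|_{\cK}$ is exactly the condition $4\mu+1\neq 0$ (in characteristic $2$ this reads $1\neq 0$, automatic, and is what forces the "separable" normalization rather than $v^{\cdot 2}=\mu 1$). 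The subtlety here is the characteristic-$2$ normalization: one cannot complete the square, so the honest statement is that a two-dimensional Hurwitz algebra is always of the étale type in~(2). If $\cC=\cK$, we are in case (2). Otherwise $\cK$ is a proper subalgebra with nondegenerate norm, so $\cC\supseteq\CD(\cK,\beta)=:\cQ$, a quaternion algebra (case (3) if $\cC=\cQ$); note $\cK$ is commutative, consistent with the machinery. If $\cC\supsetneq\cQ$, then $\cQ$ is a proper subalgebra with nondegenerate norm, hence associative, so $\CD(\cQ,\gamma)=:\cC'\subseteq\cC$ is a legitimate Cayley algebra.

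The main obstacle---and the crux of the argument---is showing the chain terminates: that $\cC'=\cC$, i.e.\ there is no nine-or-more-dimensional Hurwitz algebra. Here I would invoke the same computation that produced the associativity of $\cQ$: if $\cC$ properly contained the octonion algebra $\cC'$, then $\cC'$ would be a proper subalgebra with nondegenerate norm, and the displayed identity $\norm\bigl(d\cdot(a\cdot c),b\bigr)=\norm\bigl((d\cdot a)\cdot c,b\bigr)$ derived just before Remark~\ref{re:associative_commutative} would force $\cC'$ to be associative---but $\CD(\cQ,\gamma)$ with $\cQ$ noncommutative is \emph{not} associative by Remark~\ref{re:associative_commutative}, a contradiction. (Equivalently: one needs $\cQ$ commutative to double a third time and stay inside a composition algebra, and a quaternion algebra is never commutative.) Hence $\cC=\cC'$ is eight-dimensional. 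It remains only to remark that every proper unital subalgebra appearing above automatically has nondegenerate restricted norm, which follows because $\norm$ on the doubled algebra restricts to $\norm(a)-\alpha\norm(b)$ on $\cQ\oplus\cQ\cdot u$ and the radical would project into the radical of $\cQ$; so the inductive hypothesis applies cleanly at each step. This pins down the dimension to $1,2,4,8$ and the isomorphism type to one of (1)--(4).
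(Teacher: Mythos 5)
Your proposal is correct and follows essentially the same route as the paper: build the chain $\FF 1\subseteq\cK\subseteq\cQ\subseteq\cC$ by repeated Cayley--Dickson doubling, and terminate using the fact that any proper subalgebra with nondegenerate restricted norm must be associative, which $\CD(\cQ,\gamma)$ is not since $\cQ$ is noncommutative. Your extra remarks on the characteristic-$2$ normalization of $\cK$ and on why the restricted norms stay nondegenerate are exactly the points the paper's proof uses implicitly.
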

\begin{proof}
The only Hurwitz algebra of dimension $1$ is, up to isomorphism, the ground field. If $(\cC,\cdot,\norm)$ is a Hurwitz algebra and $\dim_\FF\cC>1$, there is an element $v\in\cC\setminus \FF 1$ such that $\norm(v,1)=1$ and $\norm\vert_{\FF 1+\FF v}$ is nondegenerate. The Cayley-Hamilton equation shows that $v^{\cdot 2}-v+\norm(v)1=0$, so $v^{\cdot 2}=v+\mu 1$, with $\mu=-\norm(v)$. The nondegeneracy condition is equivalent to the condition $4\mu+1\neq 0$. Then $\cK=\FF 1+\FF v$ is a Hurwitz subalgebra of $\cC$ and, if $\dim_\FF\cC=2$, we are done. 

If $\dim_\FF\cC>2$ we may take an element $u\in\cK^\perp$ with $\norm(u)=-\beta\neq 0$, and hence the subspace $\cQ=\cK\oplus\cK\cdot u$ is a subalgebra of $\cC$ isomorphic to $\CD(\cK,\beta)$. By the previous remark, $\cQ$ is associative (as $\cK$ is commutative), but it fails to be commutative, as $v\cdot u=u\cdot\overline{v}\neq u\cdot v$. If $\dim_\FF\cC=4$, we are done.

Finally, if $\dim_\FF\cC>4$, we may take an element $u'\in\cQ^\perp$ with $\norm(u')=-\gamma\neq 0$, and hence the subspace $\cQ\oplus\cQ\cdot u'$ is a subalgebra of $\cC$ isomorphic to $\CD(\cQ,\gamma)$, which is not associative by Remark \ref{re:associative_commutative}, so it is necessarily the whole $\cC$.
\end{proof}

Note that if $\chr\FF\neq 2$, the restriction of $\norm$ to $\FF 1$ is nondegenerate, so we could have used the same argument for dimension $>1$ in the proof above than the one used for $\dim_\FF\cC>2$. Hence we get:

\begin{corollary}\label{co:Hurwitz_chR_not2}
Every Hurwitz algebra over a field $\FF$ of characteristic not $2$ is isomorphic to one of the following:
\begin{enumerate}
\item The ground field $\FF$.

\item A two-dimensional algebra $\cK=\CD(\FF,\alpha)$ for a nonzero scalar $\alpha$.

\item A \emph{quaternion} algebra $\cQ=\CD(\cK,\beta)$ for $\cK$ as in \textup{(2)} and $0\neq \beta\in\FF$.
\item A \emph{Cayley} (or \emph{octonion}) algebra $\cC=\CD(\cQ,\gamma)$, for $\cQ$ as in \textup{(3)} and $0\neq \gamma\in\FF$.
\end{enumerate}
\end{corollary}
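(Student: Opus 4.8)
The corollary is a special case of the Generalized Hurwitz Theorem (Theorem \ref{th:Hurwitz}), obtained by streamlining the first inductive step when $\chr\FF\neq 2$, so I would reprove it by following the proof of that theorem almost verbatim, replacing the two-dimensional subalgebra $\cK$ of type (2) in the general statement with one of the form $\CD(\FF,\alpha)$. The key observation, already noted in the paragraph preceding the corollary, is that when $\chr\FF\neq 2$ the restriction of $\norm$ to $\FF 1$ is nondegenerate (since $\norm(1)=1$ and $\norm(1,1)=2\neq 0$), whereas in characteristic $2$ it is degenerate, which is precisely why the general theorem must start the doubling from a two-dimensional base.

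\textbf{Key steps.} First, the one-dimensional case: if $\dim_\FF\cC=1$ then $\cC=\FF 1\cong\FF$, giving case (1). Second, suppose $\dim_\FF\cC>1$. Since $\norm\vert_{\FF 1}$ is nondegenerate, we have the orthogonal decomposition $\cC=\FF 1\oplus(\FF 1)^\perp$, and we may pick $u\in(\FF 1)^\perp$ with $\norm(u)\neq 0$ (such $u$ exists: if every element of $(\FF 1)^\perp$ were isotropic, then $\norm$ would vanish on a hyperplane, contradicting nonsingularity in characteristic $\neq 2$). Set $\alpha=-\norm(u)$. By the Cayley-Hamilton equation of Proposition \ref{pr:Hurwitz_properties} and $\norm(u,1)=0$ we get $\overline u=-u$ and $u^{\cdot 2}=\alpha 1$, so $\cK\bydef\FF 1\oplus\FF u$ is a subalgebra with nondegenerate restricted norm, isomorphic to $\CD(\FF,\alpha)$. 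If $\dim_\FF\cC=2$ we are in case (2). Third, if $\dim_\FF\cC>2$, then by the argument in the proof of Theorem \ref{th:Hurwitz} any proper subalgebra with nondegenerate restricted norm — in particular $\cK$ — is associative; moreover $\cK$ is commutative, so $\CD(\cK,\beta)$ is associative by Remark \ref{re:associative_commutative}. Pick $u'\in\cK^\perp$ with $\norm(u')=-\beta\neq 0$; then $\cQ\bydef\cK\oplus\cK\cdot u'$ is a subalgebra isomorphic to $\CD(\cK,\beta)$, and it is not commutative since $v\cdot u'=u'\cdot\overline v\neq u'\cdot v$ for $v\in\cK\setminus\FF 1$. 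If $\dim_\FF\cC=4$ we are in case (3). Finally, if $\dim_\FF\cC>4$, pick $u''\in\cQ^\perp$ with $\norm(u'')=-\gamma\neq 0$; then $\cC$ contains a copy of $\CD(\cQ,\gamma)$, which is not associative by Remark \ref{re:associative_commutative} (as $\cQ$ is not commutative), and by the dimension bound from Theorem \ref{th:Hurwitz} this subalgebra must be all of $\cC$, giving case (4).

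\textbf{Main obstacle.} There is no real obstacle: the entire content is already contained in Theorem \ref{th:Hurwitz} and the remark following it. The only point requiring a moment's care is the very first doubling step — verifying that $\norm\vert_{\FF 1}$ is nondegenerate and that an anisotropic vector exists in $(\FF 1)^\perp$ — and checking that $\CD(\FF,\alpha)$, as produced by the formulas \eqref{eq:CD} with $\cQ=\FF$, genuinely coincides (up to isomorphism) with the two-dimensional algebra $\cK$ described in case (2) of Theorem \ref{th:Hurwitz}: writing $v=\tfrac12(1+u/\sqrt{-\alpha\mathstrut}\,)$ formally, or more honestly just completing the square, one sees $\FF 1\oplus\FF u$ with $u^{\cdot 2}=\alpha 1$ is the same data as $\FF 1\oplus\FF v$ with $v^{\cdot 2}=v+\mu 1$ once $\chr\FF\neq 2$. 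Everything downstream is then word-for-word the proof already given.
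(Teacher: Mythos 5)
Your proposal is correct and follows exactly the route the paper itself takes: the corollary is obtained from the proof of Theorem \ref{th:Hurwitz} by observing that in characteristic $\neq 2$ the restriction of $\norm$ to $\FF 1$ is already nondegenerate (as $\norm(1,1)=2\neq 0$), so the Cayley--Dickson doubling can start one step earlier, from $\FF 1$ rather than from a two-dimensional subalgebra. Your additional checks (existence of an anisotropic vector in $(\FF 1)^\perp$, and the identification of $\FF 1\oplus\FF u$ with the algebra $\cK$ of Theorem \ref{th:Hurwitz}(2) by completing the square) are exactly the points the paper leaves implicit, and they are handled correctly.
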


\begin{remark}\label{re:Hurwitz_R}
Over the real field $\RR$, the scalars $\alpha$, $\beta$ and $\gamma$ in Corollary \ref{co:Hurwitz_chR_not2} can be taken to be $\pm 1$. Note that \eqref{eq:HasCDC} and the analogous equation for $\CC$ and $\OO$ give isomorphisms $\CC\cong\CD(\RR,-1)$, $\HH\cong\CD(\CC,-1)$ and $\OO\cong\CD(\HH,-1)$.
\end{remark}

\begin{remark}
Hurwitz \cite{Hurwitz} only considered the real case with a positive definite norm. Over the years this was extended in several ways. The actual version of the Generalized Hurwitz Theorem seems to appear for the first time in \cite{Jac58} (if $\chr\FF\neq 2$) and \cite{vdBS59}.
\end{remark}

The problem of isomorphism between Hurwitz algebras of the same dimension relies on the norms:

\begin{proposition}\label{pr:Hurwitz_isomorphism}
Two Hurwitz algebras over a field are isomorphic if and only if their norms are isometric.
\end{proposition}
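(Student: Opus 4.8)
The plan is to treat the two implications separately; the hard direction is ``norms isometric $\Rightarrow$ algebras isomorphic''. The easy one follows at once from the Cayley--Hamilton equation of Proposition \ref{pr:Hurwitz_properties}: an algebra isomorphism $\varphi\colon\cC\to\cC'$ fixes the units, so applying it to $x^{\cdot2}=\norm(x,1)x-\norm(x)1$ and comparing with the Cayley--Hamilton equation of $\varphi(x)$ in $\cC'$ forces $\norm'\bigl(\varphi(x)\bigr)=\norm(x)$ as soon as $\varphi(x)\notin\FF1'$, and the case $x\in\FF1$ is settled by $\norm(\lambda1)=\lambda^2$; thus $\varphi$ is an isometry of the quadratic spaces.

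For the converse I would induct on $\dim_\FF\cC$, rebuilding a Cayley--Dickson tower of $\cC$ \emph{simultaneously} inside $\cC$ and inside $\cC'$. Since isometric norms have equal dimension and the one-dimensional case is trivial, I may assume $\dim_\FF\cC=\dim_\FF\cC'\geq2$, so that $\norm$ and $\norm'$ are nondegenerate by Proposition \ref{pr:Hurwitz_properties}. The induction carries Hurwitz subalgebras $\cC_k\subseteq\cC$, $\cC'_k\subseteq\cC'$ with nondegenerate restricted norm and an algebra isomorphism $\psi_k\colon\cC_k\to\cC'_k$, and doubles their common dimension at each step. Given such data with $\cC_k\subsetneq\cC$: by the easy direction $\psi_k$ is an isometry, so $\norm|_{\cC_k}\cong\norm'|_{\cC'_k}$, and Witt cancellation applied to $\norm\cong\norm'$ (legitimate, the cancelled forms being nondegenerate) gives $\norm|_{\cC_k^{\perp}}\cong\norm'|_{(\cC'_k)^{\perp}}$. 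I would pick $u\in\cC_k^{\perp}$ with $\alpha:=-\norm(u)\neq0$ and, via that last isometry, $u'\in(\cC'_k)^{\perp}$ with $\norm'(u')=-\alpha$. Both $\cC_k$ and $\cC'_k$ are associative (proper subalgebras with nondegenerate norm, as shown after Lemma \ref{le:Hurwitz}), so $\cC_{k+1}:=\cC_k\oplus\cC_k\cdot u$ and $\cC'_{k+1}:=\cC'_k\oplus\cC'_k\cdot u'$ are Hurwitz subalgebras isomorphic to $\CD(\cC_k,\alpha)$ and $\CD(\cC'_k,\alpha)$, again with nondegenerate norm, and $a+b\cdot u\mapsto\psi_k(a)+\psi_k(b)\cdot u'$ defines an algebra isomorphism $\psi_{k+1}$ extending $\psi_k$: a direct check against \eqref{eq:CD}, using that $\psi_k$ commutes with the standard conjugation (being a norm-preserving isomorphism fixing the unit). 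Doubling dimensions, one reaches $\cC_N=\cC$, $\cC'_N=\cC'$, and $\psi_N$ is the desired isomorphism.

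It remains to initialise the recursion. If $\chr\FF\neq2$, the norm restricted to $\FF1$ is nondegenerate, so I would take $\cC_0=\FF1$, $\cC'_0=\FF1'$ with the obvious isomorphism. If $\chr\FF=2$ this restriction is degenerate, and I would instead start from a two-dimensional subalgebra $\cC_1=\FF1+\FF v$ with $\norm(v,1)=1$, which has nondegenerate restricted norm; the point is then to locate a two-dimensional subalgebra of $\cC'$ isomorphic to $\cC_1$. As $\norm|_{\cC_1}$ is a nondegenerate subform of $\norm\cong\norm'$, it is carried by a two-dimensional subspace $W\subseteq\cC'$; $W$ contains a vector of norm $1$, and since the orthogonal group of the nondegenerate form $\norm'$ acts transitively on the vectors of a given nonzero value (Witt's theorem), I may assume $1'\in W$; then $W$ is closed under squaring by Cayley--Hamilton, hence a two-dimensional Hurwitz subalgebra whose norm form, like that of $\cC_1$, has the shape $\lambda^2+\lambda\rho+\nu\rho^2$, and isometry of two such binary forms equates the classes of their parameters $\nu$ modulo $\{a^2-a:a\in\FF\}$ --- the Arf invariant --- which is exactly the condition for the two algebras to be isomorphic. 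When $\dim_\FF\cC=2$ this last argument \emph{is} the whole proof. I expect this characteristic-two base case to be the main obstacle: because there $\FF1\subseteq\cC$ carries a degenerate (merely nonsingular) norm, the doubling recursion cannot be anchored at the ground field, and one must dispose of the two-dimensional case by hand --- essentially the classification of separable quadratic algebras by the Arf invariant (or, in characteristic $\neq2$, the discriminant) of the norm --- and check that a fixed two-dimensional subalgebra of $\cC$ re-embeds in $\cC'$.
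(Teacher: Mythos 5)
Your argument is correct and follows exactly the route the paper intends: the forward implication from the Cayley--Hamilton equation, and the converse by Witt cancellation applied along a Cayley--Dickson tower built simultaneously in both algebras. The paper only states this in two lines, and your elaboration (including the characteristic-two base case, where the recursion must be anchored at a two-dimensional subalgebra classified by the Arf invariant rather than at $\FF 1$) fills in the details faithfully and correctly.
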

\begin{proof}
Any isomorphism of Hurwitz algebras is, in particular, an isometry of the corresponding norms, due to the Cayley-Hamilton equation. The converse follows from Witt's Cancellation Theorem (see \cite[Theorem 8.4]{Elman_et_al}).
\end{proof}

A natural question is whether the restriction of the dimension of a Hurwitz algebra to be $1,2,4$ or $8$ is still valid for arbitrary composition algebras. The answer is that this is the case for finite-dimensional composition algebras.

\begin{corollary}\label{co:1248}
Let $(\cC,\cdot,\norm)$ be a finite-dimensional composition algebras. Then its dimension is either $1$, $2$, $4$ or $8$.
\end{corollary}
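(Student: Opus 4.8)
The plan is to reduce the statement to the Generalized Hurwitz Theorem (Theorem \ref{th:Hurwitz}) by replacing the given multiplication on $\cC$ with a \emph{unital} one, defined on the same vector space and having the same norm --- an \emph{isotope} of $(\cC,\cdot)$. This is Kaplansky's trick.

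First I would observe that $\cC$ contains an element $f$ with $\norm(f)=1$. Since $\norm$ is nonsingular it is not the zero form, so there is $a\in\cC$ with $\lambda\bydef\norm(a)\neq 0$. Polarizing \eqref{eq:nxy} with respect to its first argument gives $\norm(x\cdot a,z\cdot a)=\norm(x,z)\norm(a)=\lambda\,\norm(x,z)$ for all $x,z$; hence if $R_ax=x\cdot a=0$ then $\lambda\,\norm(x,z)=0$ for all $z$, so $x$ lies in the radical of $\norm$, and together with $\norm(x)=\lambda^{-1}\norm(x\cdot a)=0$ the nonsingularity of $\norm$ (its radical is either $0$, or in characteristic $2$ an anisotropic line) forces $x=0$. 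Thus $R_a$ is injective, hence bijective since $\dim_\FF\cC<\infty$; similarly $L_a$ is bijective, using the first identity in \eqref{eq:nxy_linear}. Now put $f\bydef R_a^{-1}(a)$; then $\norm(f)\lambda=\norm(f\cdot a)=\norm(a)=\lambda$, so $\norm(f)=1$. Applying the same reasoning with $f$ in place of $a$ shows that $L_f$ and $R_f$ are bijective.

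Next I would define a new multiplication on the vector space $\cC$ by
\[
x\diamond y\bydef (R_f^{-1}x)\cdot(L_f^{-1}y).
\]
Because $\norm(f)=1$, multiplicativity of $\norm$ gives $\norm(R_f^{-1}x)=\norm(x)$ and $\norm(L_f^{-1}y)=\norm(y)$, whence $\norm(x\diamond y)=\norm(x)\norm(y)$: the same nonsingular quadratic form $\norm$ is multiplicative for $\diamond$ as well. Moreover $e\bydef f\cdot f$ is a two-sided identity for $\diamond$, since $L_f^{-1}(f\cdot f)=f=R_f^{-1}(f\cdot f)$, so that $x\diamond e=R_f(R_f^{-1}x)=x$ and $e\diamond y=L_f(L_f^{-1}y)=y$. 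Therefore $(\cC,\diamond,\norm)$ is a Hurwitz algebra on the same underlying space, and Theorem \ref{th:Hurwitz} yields $\dim_\FF\cC\in\{1,2,4,8\}$.

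The only point needing some care is the invertibility of $L_a,R_a$ (and then of $L_f,R_f$) in the second paragraph: one must treat the characteristic-$2$ case in which $\norm$ is nonsingular but degenerate, where the argument relies on the radical being an anisotropic line. Everything else is a routine manipulation of the linearized multiplicativity identities \eqref{eq:nxy_linear}.
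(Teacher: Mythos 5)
Your proof is correct and follows essentially the same route as the paper: Kaplansky's trick of passing to the isotope $x\diamond y=R^{-1}(x)\cdot L^{-1}(y)$ for a norm-one element and invoking the Generalized Hurwitz Theorem. The only (harmless) differences are that the paper takes $u=\frac{1}{\norm(a)}a^{\cdot 2}$ as its norm-one element where you take $R_a^{-1}(a)$, and that you spell out the injectivity-of-isometries argument that the paper leaves implicit.
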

\begin{proof}
Let $a\in\cC$ be an element of nonzero norm. Then $u=\frac{1}{\norm(a)}a^{\cdot 2}$ satisfies $\norm(u)=1$. Using the so called \emph{Kaplansky's trick} \cite{Kap53}, consider the new multiplication
\[
x\diamond y=R_u^{-1}(x)\cdot L_u^{-1}(y).
\]
Note that since the left and right multiplications by a norm $1$ element are isometries, we still have $\norm(x\diamond y)=\norm(x)\norm(y)$, so $(\cC,\diamond,\norm)$ is a composition algebra too. But $u^{\cdot 2}\diamond x=u\cdot L_u^{-1}(x)=x=x\diamond u^{\cdot 2}$ for any $x$, so the element $u^{\cdot 2}$ is the unity of $(\cC,\diamond)$ and $(\cC,\diamond,\norm)$ is a Hurwitz algebra, and hence $\dim_\FF\cC$ is restricted to $1,2,4$ or $8$.
\end{proof}

However, contrary to the thoughts expressed in \cite{Kap53}, there are examples of infinite-dimensional composition algebras. For example (see \cite{UrbanikWright60}), let $\varphi:\NN\times\NN\rightarrow\NN$ be a bijection (for instance, $\varphi(n,m)=2^{n-1}(2m-1)$), and let $\cA$ be a vector space over a field $\FF$ of characteristic not $2$ with a countable basis $\{u_n:n\in\NN\}$. Define a multiplication and a norm on $\cA$ by 
\[
u_n\cdot u_m=u_{\varphi(n,m)},\qquad \norm(u_n,u_m)=\frac{1}{2}\delta_{n,m}.
\]
Then $(\cA,\cdot,\norm)$ is a composition algebra. 

In \cite{EP_infinite} one may find examples of infinite-dimensional composition algebras of arbitrary infinite dimension, which are even left unital.

\subsection{Isotropic Hurwitz algebras} \null\quad

Assume now that the norm of a Hurwitz algebra $(\cC,\cdot,\norm)$ represents $0$. That is, there is a nonzero element $a\in\cC$ such that $\norm(a)=0$. This is always the case if $\dim_\FF\cC\geq 2$ and $\FF$ is algebraically closed.

With $a$ as above, take $b\in\cC$ such that $\norm(a,\overline{b})=1$, so that $\norm(a\cdot b,1)=1$. Also $\norm(a\cdot b)=\norm(a)\norm(b)=0$. By the Cayley-Hamilton equation, the nonzero element $e_1\bydef a\cdot b$ satisfies $e_1^{\cdot 2}=e_1$, that is, $e_1$ is an idempotent. Consider too the idempotent  $e_2\bydef 1-e_1=\overline{e_1}$, and the subalgebra $\cK=\FF e_1\oplus \FF e_2\,(\cong\FF\times\FF)$ generated by $e_1$. ($1=e_1+e_2$). 

For any $x\in\cK^\perp$, $x\cdot e_1+\overline{x\cdot e_1}=\norm(x\cdot e_1,1)=\norm(x,\overline{e_1})=\norm(x,e_2)=0$ and, as $\overline{x}=-x$ and $\overline{e_1}=e_2$, we conclude that $x\cdot e_1=e_2\cdot x$, and in the same way, $x\cdot e_2=e_1\cdot x$, for any $x\in\cK^\perp$.

But $x=1\cdot x=e_1\cdot x+e_2\cdot x$, and $e_2\cdot (e_1\cdot x)=(1-e_1)\cdot (e_1\cdot x)=0=e_1\cdot (e_2\cdot x)$. It follows that $\cK^\perp$ splits as $\cK^\perp=\cU\oplus \cV$ with
\[
\begin{split}
\cU&=\{x\in\cC: e_1\cdot x=x=x\cdot e_2,\ e_2\cdot x=0=x\cdot e_1\},\\
\cV&=\{x\in\cC: e_2\cdot x=x=x\cdot e_1,\ e_1\cdot x=0=x\cdot e_2\}.
\end{split}
\]

For any $u\in\cU$, $\norm(u)=\norm(e_1\cdot u)=\norm(e_1)\norm(u)=0$, so $\cU$, and $\cV$ too, are totally isotropic subspaces of $\cK^\perp$ paired by the norm. In particular, $\dim_\FF\cU=\dim_\FF\cV$, and this common value is either $0$, $1$ or $3$, depending on $\dim_\FF\cC$ being $2$, $4$ or $8$. The case of $\dim_\FF\cU=0$ is trivial, and the case of $\dim_\FF\cU=1$ is quite easy (and subsumed in the arguments below). Hence, let us assume that $\cC$ is a Cayley algebra (dimension $8$), so $\dim_\FF\cU=\dim_\FF\cV=3$.

For any $u_1,u_2\in \cU$ and $v\in\cV$, using \eqref{eq:nxy_linear} we get
\[
\begin{split}
&\norm(u_1\cdot u_2,\cK)\subseteq \norm(u_1,\cK\cdot u_2)\subseteq \norm(\cU,\cU)=0,\\
&\norm(u_1\cdot u_2,v)=\norm(u_1\cdot u_2,e_2\cdot v)=-\norm(e_2\cdot u_2,u_1\cdot v)+
 \norm(u_1,e_2)\norm(u_2,v)=0.
 \end{split}
\]
Hence $\cU^{\cdot 2}$ is orthogonal to both $\cK$ and $\cV$, so it must be contained in $\cV$. Also $\cV^{\cdot 2}\subseteq \cU$.

Besides,
\[
\norm(\cU,\cU\cdot\cV)\subseteq \norm(\cU^{\cdot 2},\cV)\subseteq \norm(\cV,\cV)=0=\norm(\cV,\cU\cdot\cV),
\]
so that $\cU\cdot\cV\subseteq (\cU+\cV)^\perp=\cK$, and also $\cV\cdot\cU\subseteq \cK$. But for any $u\in\cU$ and $v\in \cV$, we have
\[
\norm(u\cdot v,e_2)=-\norm(u,e_2\cdot v)=-\norm(u,v),\quad
\norm(u\cdot v,e_1)=-\norm(u,e_1\cdot v)=0,
\]
and the analogues for $v\cdot u$. We conclude that
\[
u\cdot v=-\norm(u,v)e_1,\quad v\cdot u=-\norm(v,u)e_2.
\]

Now, for linearly independent elements $u_1,u_2\in\cU$, let $v\in\cV$ with $\norm(u_1,v)\neq 0=\norm(u_2,v)$. Then the alternative law gives $(u_1\cdot u_2)\cdot v=-(u_1\cdot v)\cdot u_2+u_1\cdot (u_2\cdot v+v\cdot u_2)=-\norm(u_1,v)u_2\neq 0$, so that $u_1\cdot u_2\neq 0$. In particular $\cU^{\cdot 2}\neq 0$, and the same happens with $\cV$.

Consider the trilinear map:
\[
\begin{split}
\cU\times\cU\times\cU&\longrightarrow \FF\\
(x,y,z)\ &\mapsto \norm(x\cdot y,z).
\end{split}
\]
This is  alternating because $x^{\cdot 2}=0$ for any $x\in \cU$ by the Cayley-Hamilton equation, and $\norm(x\cdot y,y)=-\norm(x,y\cdot y)=0$. It is also nonzero, because $\cU^{\cdot 2}\subseteq \cV$, so that $\norm(\cU^{\cdot 2},\cU)=\norm(\cU^{\cdot 2},\cC)\neq 0$.

Fix a basis $\{u_1,u_2,u_3\}$ of $\cU$ with $\norm(u_1\cdot u_2,u_3)=1$ and take $v_1\bydef u_2\cdot u_3$, $v_2\bydef u_3\cdot u_1$, $v_3\bydef u_1\cdot u_2$. Then $\{v_1,v_2,v_3\}$ is the dual basis in $\cV$ relative to the norm, and the multiplication of the basis $\{e_1,e_2,u_1,u_2,u_3,v_1,v_2,v_3\}$ is completely determined. For instance, $v_1\cdot v_2=v_1\cdot (u_3\cdot u_1)=-u_3\cdot (v_1\cdot u_1)=-u_3\cdot\bigl(-\norm(v_1,u_1)e_2)=u_3$, ...

The multiplication table is given in Figure \ref{fig:splitCayley}.

\begin{figure}[h!]
\[ \vbox{\offinterlineskip
\halign{\hfil$#$\enspace\hfil&#\vreglon
 &\hfil\enspace$#$\enspace\hfil
 &\hfil\enspace$#$\enspace\hfil&#\vregleta
 &\hfil\enspace$#$\enspace\hfil
 &\hfil\enspace$#$\enspace\hfil
 &\hfil\enspace$#$\enspace\hfil&#\vregleta
 &\hfil\enspace$#$\enspace\hfil
 &\hfil\enspace$#$\enspace\hfil
 &\hfil\enspace$#$\enspace\hfil&#\vreglon\cr
 &\omit\hfil\vrule width 1pt depth 4pt height 10pt
   &e_1&e_2&\omit&u_1&u_2&u_3&\omit&v_1&v_2&v_3&\omit\cr
 \noalign{\hreglon}
 e_1&&e_1&0&&u_1&u_2&u_3&&0&0&0&\cr
 e_2&&0&e_2&&0&0&0&&v_1&v_2&v_3&\cr
 &\multispan{11}{\hregletafill}\cr
 u_1&&0&u_1&&0&v_3&-v_2&&-e_1&0&0&\cr
 u_2&&0&u_2&&-v_3&0&v_1&&0&-e_1&0&\cr
 u_3&&0&u_3&&v_2&-v_1&0&&0&0&-e_1&\cr
 &\multispan{11}{\hregletafill}\cr
 v_1&&v_1&0&&-e_2&0&0&&0&u_3&-u_2&\cr
 v_2&&v_2&0&&0&-e_2&0&&-u_3&0&u_1&\cr
 v_3&&v_3&0&&0&0&-e_2&&u_2&-u_1&0&\cr
 &\multispan{12}{\hreglonfill}\cr}}
\]
\caption{Multiplication table of the split Cayley algebra}\label{fig:splitCayley}
\end{figure}

The Cayley algebra with this multiplication table is called the \emph{split Cayley algebra} and denoted by $\cC_s(\FF)$.
The subalgebra spanned by $e_1,e_2,u_1,v_1$ is isomorphic to the algebra $\Mat_2(\FF)$ of $2\times 2$ matrices.

We summarize the above arguments in the next result.

\begin{theorem}\label{th:isotropic}
There are, up to isomorphism, only three Hurwitz algebras with isotropic norm: $\FF\times\FF$, $\Mat_2(\FF)$, and $\cC_s(\FF)$.
\end{theorem}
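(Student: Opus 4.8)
The bulk of the work has already been done in the discussion preceding the statement, so the plan is essentially to collect those facts and split into the three admissible dimensions. First I would dispose of the trivial reduction: a Hurwitz algebra of dimension $1$ has norm $\alpha\mapsto\alpha^2$, which is anisotropic, so the hypothesis that $\norm$ represents $0$ forces $\dim_\FF\cC>1$; by the Generalized Hurwitz Theorem (Theorem~\ref{th:Hurwitz}) the dimension is then $2$, $4$ or $8$, and by Proposition~\ref{pr:Hurwitz_properties} $\norm$ is nondegenerate. Then I would invoke the construction carried out above: choosing an isotropic $a\neq 0$ and $b$ with $\norm(a,\overline{b})=1$ produces a nontrivial idempotent $e_1=a\cdot b$, a complementary idempotent $e_2=1-e_1=\overline{e_1}$, the hyperbolic plane $\cK=\FF e_1\oplus\FF e_2\cong\FF\times\FF$, and the splitting $\cC=\cK\oplus\cU\oplus\cV$ into the Peirce-type subspaces, with $\cU$ and $\cV$ totally isotropic and nondegenerately paired by $\norm$, hence of equal dimension $0$, $1$ or $3$ according as $\dim_\FF\cC=2$, $4$ or $8$.

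In the case $\dim_\FF\cC=2$ one has $\cU=\cV=0$, so $\cC=\cK\cong\FF\times\FF$, and uniqueness is immediate (alternatively, any isotropic binary quadratic form is hyperbolic, so all such algebras have isometric norms and Proposition~\ref{pr:Hurwitz_isomorphism} applies). In the case $\dim_\FF\cC=4$, I would pick $u_1\in\cU$ and $v_1\in\cV$ with $\norm(u_1,v_1)=1$. Since $u_1,v_1\in\cK^\perp$ have zero norm and are orthogonal to $1$, Cayley--Hamilton gives $u_1^{\cdot 2}=v_1^{\cdot 2}=0$; the identities recorded above give $u_1\cdot v_1=-e_1$ and $v_1\cdot u_1=-e_2$; and the action of $e_1,e_2$ on $\cU,\cV$ is fixed by the definitions of those subspaces. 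Thus the multiplication on the basis $\{e_1,e_2,u_1,v_1\}$ is precisely the $4\times4$ block of Figure~\ref{fig:splitCayley}, which was identified there with $\Mat_2(\FF)$; hence $\cC\cong\Mat_2(\FF)$, with uniqueness again immediate.

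The only substantial case is $\dim_\FF\cC=8$, where $\dim_\FF\cU=\dim_\FF\cV=3$. Here I would use the structural facts established above: $\cU^{\cdot 2}\subseteq\cV$, $\cV^{\cdot 2}\subseteq\cU$, $\cU\cdot\cV\subseteq\FF e_1$ with $u\cdot v=-\norm(u,v)e_1$, $\cV\cdot\cU\subseteq\FF e_2$ with $v\cdot u=-\norm(v,u)e_2$, together with the $e_i$-action. The trilinear form $(x,y,z)\mapsto\norm(x\cdot y,z)$ on the $3$-dimensional space $\cU$ is alternating (because $x^{\cdot 2}=0$ and $\norm(x\cdot y,y)=0$ for $x,y\in\cU$) and nonzero (the alternative law gives $\cU^{\cdot 2}\neq 0$, and $\cU^{\cdot 2}\subseteq\cV$ is nondegenerately paired with $\cU$); since the space of alternating trilinear forms on a $3$-dimensional space is one-dimensional, one may rescale a basis so that $\norm(u_1\cdot u_2,u_3)=1$. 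Putting $v_1=u_2\cdot u_3$, $v_2=u_3\cdot u_1$, $v_3=u_1\cdot u_2$, the alternation shows $\{v_1,v_2,v_3\}$ is the basis of $\cV$ dual to $\{u_1,u_2,u_3\}$, and then the alternative law together with the relations above forces every product of basis vectors, the outcome being exactly the table of Figure~\ref{fig:splitCayley}. Hence $\cC\cong\cC_s(\FF)$.

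Finally, $\FF\times\FF$, $\Mat_2(\FF)$ and $\cC_s(\FF)$ have dimensions $2$, $4$, $8$, so they are pairwise non-isomorphic, and each one has isotropic norm (the norm of $e_1-e_2$ in $\FF\times\FF$ is $0$, the determinant on $\Mat_2(\FF)$ is isotropic, and $\cC_s(\FF)$ was constructed from an isotropic vector), which completes the list. I expect the only real obstacle to be the dimension-$8$ bookkeeping: one must check that imposing $\norm(u_1\cdot u_2,u_3)=1$ genuinely leaves no residual freedom, i.e.\ that the verification of ``$\{v_i\}$ is the dual basis'' and the closing-up of the products $\cV\cdot\cV$, $\cU\cdot\cV$ and $\cV\cdot\cU$ all come out as in the table. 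This is routine but is where the content lies; the cases of dimension $2$ and $4$, and the non-isomorphism and realizability at the end, are essentially automatic.
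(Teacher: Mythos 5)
Your proof is correct and takes essentially the same route as the paper, whose own proof of this theorem is literally a one-line summary of the preceding construction of the idempotents $e_1,e_2$, the decomposition $\cC=\cK\oplus\cU\oplus\cV$, and the determination of the multiplication table in Figure~\ref{fig:splitCayley}. Your explicit treatment of dimensions $1$, $2$ and $4$ and of the final realizability/non-isomorphism check simply makes precise what the paper declares ``quite easy'' and ``subsumed in the arguments below.''
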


\begin{corollary}\label{th:reaL_field}
The real Hurwitz algebras are, up to isomorphism, the following algebras:
\begin{itemize}
\item the classical division algebras $\RR$, $\CC$, $\HH$, and $\OO$, and

\item the algebras $\RR\times\RR$, $\Mat_2(\RR)$, and $\cC_s(\RR)$.
\end{itemize}
\end{corollary}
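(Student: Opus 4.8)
The plan is to deduce the classification of real Hurwitz algebras from the Generalized Hurwitz Theorem (Theorem \ref{th:Hurwitz}) together with Proposition \ref{pr:Hurwitz_isomorphism} and Theorem \ref{th:isotropic}. The key structural fact over $\RR$ is that a nonsingular (here, since $\chr\RR\neq 2$, nondegenerate) quadratic form is classified up to isometry by its dimension and signature, and that it represents $0$ if and only if it is \emph{indefinite}. So the strategy is: go dimension by dimension through the list in Theorem \ref{th:Hurwitz} (equivalently Corollary \ref{co:Hurwitz_chR_not2}), and in each dimension separate the \emph{anisotropic} (positive or negative definite) case from the \emph{isotropic} case.

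First I would recall that the norm of a Hurwitz algebra, being multiplicative with $\norm(1)=1$, cannot be negative definite (it takes the value $1$), so a definite norm must be positive definite. By Remark \ref{re:Hurwitz_R}, the Cayley--Dickson doublings $\CD(\RR,-1)=\CC$, $\CD(\CC,-1)=\HH$, $\CD(\HH,-1)=\OO$ carry precisely the positive definite norms in dimensions $2,4,8$, and these are anisotropic. Conversely, if a real Hurwitz algebra $\cC$ has anisotropic norm, then in each step of the Cayley--Dickson construction in the proof of Theorem \ref{th:Hurwitz} the scalars $-\norm(v)$, $-\norm(u)$, $-\norm(u')$ are forced to be positive (otherwise the norm would represent a nonzero square root of $0$ off the previously built subalgebra), so by Proposition \ref{pr:Hurwitz_isomorphism} $\cC$ is isometric, hence isomorphic, to $\RR$, $\CC$, $\HH$ or $\OO$ according to its dimension. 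In dimension $1$ there is of course only $\RR$.

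It then remains to treat the case where $\norm$ represents $0$, i.e.\ $\dim_\RR\cC\geq 2$ and the norm is indefinite. Here Theorem \ref{th:isotropic} applies verbatim over $\FF=\RR$: the only Hurwitz algebras with isotropic norm are $\RR\times\RR$, $\Mat_2(\RR)$ and $\cC_s(\RR)$, of dimensions $2,4,8$ respectively. Combining the two cases in each dimension, and noting that $2$, $4$ and $8$ each produce exactly one definite and one indefinite Hurwitz algebra while dimension $1$ produces only $\RR$, gives the stated list. One should also check the lists are non-redundant: the seven algebras are pairwise non-isomorphic because their norms have distinct (dimension, signature) pairs — $(1,0)$ for $\RR$; $(2,0)$ and $(1,1)$ in dimension $2$; $(4,0)$ and $(2,2)$ in dimension $4$; $(8,0)$ and $(4,4)$ in dimension $8$ — again invoking Proposition \ref{pr:Hurwitz_isomorphism}.

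The only real obstacle is the bookkeeping of signatures: one must verify that the Cayley--Dickson formula $\norm(a+b\cdot u)=\norm(a)-\alpha\norm(b)$ from \eqref{eq:CD} forces, over $\RR$, exactly the two possibilities ``all $\alpha$'s negative'' (definite case) and ``not all negative'' (which, since any single positive $\alpha$ already makes the form indefinite and all indefinite nondegenerate forms of a given even dimension over $\RR$ with the split signature are isometric, collapses to a single split algebra in each dimension). This is precisely the content of Remark \ref{re:Hurwitz_R} combined with Theorem \ref{th:isotropic}, so no genuinely new argument is needed; the proof is essentially an assembly of already-established results.
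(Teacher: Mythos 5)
Your proposal is correct and follows essentially the same route as the paper: the key dichotomy that a nondegenerate real quadratic form is either isotropic or (since $\norm(1)=1$) positive definite, with Theorem~\ref{th:isotropic} handling the isotropic case and Proposition~\ref{pr:Hurwitz_isomorphism} identifying the definite case with $\RR$, $\CC$, $\HH$, $\OO$. The paper states this in two lines; your version merely spells out the signature bookkeeping and the non-redundancy check in more detail.
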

\begin{proof}
It is enough to take into account that a nondegenerate quadratic form over $\RR$ is either isotropic or definite. Hence the norm of a real Hurwitz algebra is either isotropic or positive definite (as $\norm(1)=1$).
\end{proof}

\bigskip

\section{Symmetric composition algebras}\label{se:symmetric}

In this section, a new important family of composition algebras will be described.

\begin{definition}
A composition algebra $(\cS,*,\norm)$ is said to be a \emph{symmetric
composition algebra} if $L_x^*=R_x$ for any $x\in \cS$ (that is,
$\norm(x*y,z)=\norm(x,y*z)$ for any $x,y,z\in \cS$).
\end{definition}

\begin{theorem}\label{th:symmetric_comp}
Let $(\cS,*,\norm)$ be a composition algebra. The following conditions are equivalent:
\begin{itemize}
\item[(a)] $(\cS,*,\norm)$ is symmetric.

\item[(b)] For any $x,y\in \cS$, $(x*y)*x=x*(y*x)=\norm(x)y$.
\end{itemize}
The dimension of any symmetric composition algebra is finite, and hence restricted to $1,2,4$, or $8$.
\end{theorem}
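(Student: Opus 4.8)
The plan is to establish the equivalence $(a)\Leftrightarrow(b)$ first, and then to deduce finite‑dimensionality (hence the restriction of the dimension) from $(b)$ by running Kaplansky's trick exactly as in the proof of Corollary \ref{co:1248}. As a preliminary reduction, one checks — just as for Hurwitz algebras — that $\norm$ must be nondegenerate unless $\dim_\FF\cS=1$: if $\cS^\perp\neq 0$ then $\chr\FF=2$, $\cS^\perp$ is one‑dimensional with $\norm$ anisotropic on it, and a linearization of \eqref{eq:nxy} then forces $\norm$ to be totally degenerate, contradicting nonsingularity unless $\dim_\FF\cS=1$. In that exceptional case all assertions are trivial, so from now on assume $\norm$ nondegenerate.

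For $(a)\Rightarrow(b)$: assuming $L_x^*=R_x$, for all $x,y,z$ one has
\[
\norm\bigl((x*y)*x,z\bigr)=\norm(x*y,x*z)=\norm(x)\norm(y,z),
\]
the first equality being the symmetry of $\norm$ and the second the first linearized identity in \eqref{eq:nxy_linear}; nondegeneracy yields $(x*y)*x=\norm(x)y$, and the same computation using \eqref{eq:nxy_linear} with a common right factor gives $x*(y*x)=\norm(x)y$.

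For $(b)\Rightarrow(a)$, put $D_x\bydef L_x^*-R_x\in\End(\cS)$; the goal is $D_x=0$ for every $x$. Multiplicativity of $\norm$ together with nondegeneracy gives $L_x^*L_x=\norm(x)\id$, while $(b)$ says precisely that $R_xL_x=L_xR_x=\norm(x)\id$. Polarizing the identities $L_x^*L_x=\norm(x)\id$ and $R_xL_x=\norm(x)\id$ in $x$ and subtracting yields
\[
D_xL_y+D_yL_x=0\qquad\text{for all }x,y.
\]
Now choose $e\in\cS$ with $\norm(e)\neq 0$ (such $e$ exists since $\norm$ is not identically zero). By $(b)$ the operator $\norm(e)^{-1}R_e$ is a two‑sided inverse of $L_e$, so $L_e$ is bijective without any appeal to finite‑dimensionality; and $L_e^*L_e=\norm(e)\id=R_eL_e$ then forces $D_e=0$. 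Substituting $y=e$ in the displayed identity gives $D_xL_e=0$, and cancelling the bijective $L_e$ gives $D_x=0$ for all $x$, i.e. $(a)$.

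Finally, granting $(b)$ (equivalently $(a)$), fix $a$ with $\norm(a)\neq 0$, set $u=\norm(a)^{-1}a^{*2}$ so that $\norm(u)=1$; then $L_u$ and $R_u$ are bijective (as above) and are isometries because $\norm(u)=1$, so $x\diamond y\bydef R_u^{-1}(x)*L_u^{-1}(y)$ defines a composition algebra $(\cS,\diamond,\norm)$ with two‑sided unit $u^{*2}$, exactly as in the proof of Corollary \ref{co:1248}. Being a Hurwitz algebra, Theorem \ref{th:Hurwitz} forces $\dim_\FF\cS\in\{1,2,4,8\}$, in particular finite. The main obstacle is the implication $(b)\Rightarrow(a)$: the naive attempt ``$L_x^*L_x=\norm(x)\id=R_xL_x$, hence $L_x^*=R_x$'' works only for anisotropic $x$, and over small fields the anisotropic vectors need not span $\cS$, so one cannot finish by a density argument; the linearized relation $D_xL_y+D_yL_x=0$ is exactly the tool that transports the vanishing of $D_e$ at a single anisotropic $e$ to all of $\cS$, and isolating and exploiting it is the key step.
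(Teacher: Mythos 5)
Your argument is correct in substance and, in the direction $(b)\Rightarrow(a)$, genuinely different from the paper's. The paper fixes an anisotropic middle element $y$, writes an arbitrary $z$ as $z=z'*y$ (possible since $(b)$ makes $R_y$ bijective), computes $\norm(x*y,z'*y)=\norm(x,z')\norm(y)=\norm\bigl(x,y*(z'*y)\bigr)=\norm(x,y*z)$, and then extends to isotropic $y$ by writing it as a sum of two anisotropic elements and using linearity in $y$. Your polarized identity $D_xL_y+D_yL_x=0$ accomplishes the same transport, from a single anisotropic $e$ to all of $\cS$, in one clean stroke, and it checks out. One caveat of phrasing: before finite-dimensionality is established, the adjoint $L_x^*$ need not exist as an operator, so $D_x$ should be read as the bilinear form $(u,v)\mapsto\norm(u,x*v)-\norm(u*x,v)$; every step you take (polarizing $\norm(x*u,x*v)=\norm(x)\norm(u,v)$ and $R_xL_x=\norm(x)\id$, the vanishing at $e$, cancelling the surjective $L_e$) survives this rereading verbatim. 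Your finite-dimensionality argument is the paper's modified Kaplansky trick in disguise: since $(b)$ gives $L_u^{-1}=R_u$ and $R_u^{-1}=L_u$ for $\norm(u)=1$, your product $R_u^{-1}(x)*L_u^{-1}(y)$ is literally $(u*x)*(y*u)$, which is the product the paper uses, and the appeal to Theorem \ref{th:Hurwitz} is exactly as in Corollary \ref{co:1248}.

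The one step you should not wave through is the preliminary reduction ``$\cS^\perp\neq0\Rightarrow\dim_\FF\cS=1$''. For Hurwitz algebras (Proposition \ref{pr:Hurwitz_properties}) this is proved using the unit: the adjoint formula $\norm(a\cdot x,y)=\norm(a,y\cdot\overline{x})$ comes from setting $t=1$ in \eqref{eq:nxy_linear}, and it is what puts $a\cdot x$ back into the radical. A general composition algebra has no unit, so ``just as for Hurwitz algebras'' does not literally apply; and your $(a)\Rightarrow(b)$ direction leans on this reduction, since with a nonzero radical you cannot conclude $(x*y)*x=\norm(x)y$ merely from $(x*y)*x-\norm(x)y\in\cS^\perp$. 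Two repairs are available: (i) do what the paper does, namely compute
\[
\norm\bigl((x*y)*x-\norm(x)y\bigr)=2\norm(x)^2\norm(y)-\norm(x)\norm(x*y,x*y)=0
\]
and invoke nonsingularity (a radical element of zero norm must vanish); or (ii) prove the reduction \emph{under hypothesis $(a)$}, where it is easy: for $a\in\cS^\perp$ one has $\norm(a*y,z)=\norm(a,y*z)=0$, so $a*y=f(y)a$ for a linear $f$, whence $\norm(y)=f(y)^2$, the polar form $\norm(y,z)=2f(y)f(z)$ vanishes (we are in characteristic $2$), and $\dim_\FF\cS\le1$. Either repair closes the gap; everything else stands.
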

\begin{proof}
If $(\cS,*,\norm)$ is symmetric, then for any $x,y,z\in \cS$,
\[
\norm\bigl((x*y)*x,z\bigr)=\norm(x*y,x*z)=\norm(x)\norm(y,z)=\norm\bigl(\norm(x)y,z\bigr)
\]
so that $(x*y)*x-\norm(x)y\in\cS^\perp$. Also 
\[
\begin{split}
\norm\bigl((x*y)*x-\norm(x)y\bigr)
 &=\norm\bigl((x*y)*x\bigr)+\norm(x)^2\norm(y)-\norm(x)\norm\bigl((x*y)*x,y\bigr)\\
 &=2\norm(x)^2\norm(y)-\norm(x)\norm(x*y,x*y)=0,
\end{split}
\]
whence (b), since $\norm$ is nonsingular.

Conversely, take $x,y,z\in \cS$ with $\norm(y)\ne 0$, so that $L_y$ and $R_y$ are bijective, and hence there is an element $z'\in \cS$ with $z=z'*y$. Then:
\[
\norm(x*y,z)=\norm(x*y,z'*y)=\norm(x,z')\norm(y)
  =\norm\bigl(x,y*(z'*y)\bigr)=\norm(x,y*z).
\]
This proves (a) assuming $\norm(y)\ne 0$, but any isotropic element is the sum of two non isotropic elements, so (a) follows.

Finally, we can use a modified version of Kaplansky's trick (see Corollary \ref{co:1248}) as follows. Let $a\in\cS$ be a norm $1$ element and define a new product on $\cS$ by:
\[
x\diamond y\bydef (a*x)*(y*a),
\]
for any $x,y\in\cS$. Then $(\cS,\diamond,\norm)$ is also a composition algebra. Let $e=a^{* 2}$. Then, using (b) we have $e\diamond x=\bigl(a*(a*a)\bigr)*(x*a)=a*(x*a)=x$, and similarly $x\diamond e=x$ for any $x$. Thus $(\cS,\diamond,\norm)$ is a Hurwitz algebra with unity $e$, and hence it is finite-dimensional.
\end{proof}

\begin{remark} 
Condition (b) above implies that $((x*y)*x)*(x*y)=\norm(x*y)x$, but also $((x*y)*x)*(x*y)=\norm(x)y*(x*y)=\norm(x)\norm(y)x$, so that condition (b) already forces the quadratic form $\norm$ to be multiplicative.
\end{remark}

\begin{examples}\cite{Okubo78}]\label{ex:Okubo} \null\quad
\begin{itemize}
\begin{samepage}
\item \textbf{Para-Hurwitz algebras:}\quad Let $(\cC,\cdot,\norm)$ be a Hurwitz algebra and consider the composition algebra $(\cC,\bullet,\norm)$ with the new product given by
\[
    x\bullet y=\overline{x}\cdot\overline{y}.
\]
Then $\norm(x\bullet y,z)=\norm(\overline{x}\cdot\overline{y},z)=\norm(\overline{x},z\cdot y)=\norm(x,\overline{z\cdot y})=\norm(x,y\bullet z)$,
    for any $x,y,z$, so that $(\cC,\bullet,\norm)$ is a symmetric composition algebra. (Note that $1\bullet x=x\bullet 1=\overline{x}=\norm(x,1)1-x$  for any $x$: $1$ is a \emph{para-unit} of $(\cC,\bullet,\norm)$.)
\end{samepage}

\item \textbf{Okubo algebras:}\quad Assume $\chr\FF\ne 3$ (the case of $\chr\FF =3$ requires a different definition), and let $\omega\in\FF$ be a primitive cubic root of $1$. Let $\cA$ be a central simple associative algebra of degree $3$ with trace $\tr$, and let $\cS=\cA_0=\{x\in \cA:\tr(x)=0\}$. For any $x\in\cS$ the quadratic form $\frac{1}{2}\tr(x^2)$ make sense even if $\chr\FF=2$ (check this!). Define now a multiplication and norm on $\cS$ by:
    \[
    \begin{split}
    &x*y=\omega xy-\omega^2 yx-\frac{\omega-\omega^2}{3}\tr(xy)1,\\[6pt]
    &\norm(x)=-\frac{1}{2}\tr(x^2),
    \end{split}
    \]
Then, for any $x,y\in \cS$:
    \[
    \begin{split}
    (x*y)*x&=\omega(x*y)x-\omega^2x(x*y)-
        \frac{\omega-\omega^2}{3}\tr\bigl((x*y)x\bigr)1\\
        &=\omega^2xyx-yx^2-\frac{\omega^2-1}{3}\tr(xy)x -x^2y+\omega xyx+\frac{1-\omega}{3}\tr(xy)x\\
        &\qquad\quad -\frac{\omega-\omega^2}{3}\tr\Bigl((\omega-\omega^2)x^2y\Bigr)1\quad\text{($\tr(x)=0$)}\\
        &=-\bigl(x^2y+yx^2+xyx\bigr)+\tr(xy)x+\tr(x^2y)1\quad\text{($(\omega-\omega^2)^2=-3$).}
    \end{split}
    \]
But if $\tr(x)=0$, then $x^3-\frac{1}{2}\tr(x^2)x-\det(x)1=0$, so
    \[
    x^2y+yx^2+xyx-\bigl(\tr(xy)x+\frac{1}{2}\tr(x^2)y\bigr)\in\FF 1.
    \]
Since $(x*y)*x\in \cA_0$, we have $(x*y)*x=-\frac{1}{2}\tr(x^2)y=x*(y*x)$.

Therefore $(\cS,*,\norm)$ is a symmetric composition algebra.

\smallskip

    In case $\omega\not\in\FF$, take $\KK=\FF[\omega]$ and a central simple associative algebra $\cA$ of degree $3$ over $\KK$ endowed with a $\KK/\FF$-involution of second kind $J$. Then take $\cS=K(\cA,J)_0=\{x\in \cA_0: J(x)=-x\}$ (this is a $\FF$-subspace) and use the same formulas above to define the multiplication and the norm.
\end{itemize}
\end{examples}

\begin{remark}
For $\FF=\RR$, take $\cA=\Mat_3(\CC)$, and then there appears the Okubo algebra $(\cS,*,\norm)$ with $\cS=\frsu_3=\{x\in \Mat_3(\CC): \tr(x)=0,\ x^*=-x\}$ ($x^*$ denotes the conjugate transpose of $x$). This algebra was termed the algebra of \emph{pseudo-octonions} by Okubo \cite{Okubo78}, who studied these algebras and classified them, under some restrictions, in joint work with Osborn \cite{OO81a,OO81b}.

The name \emph{Okubo algebras} was given in \cite{EM_Okubo}. Faulkner \cite{Faulkner88} discovered independently Okubo's construction in a more general setting, related to separable alternative algebras of degree $3$, and gave the key idea for the classification of the symmetric composition algebras in \cite{EM93} ($\chr\FF\neq 2,3$). A different, less elegant, classification was given in \cite{EM91} using that Okubo algebras are \emph{Lie-admissible}.

The name \emph{symmetric composition algebra} was given in \cite[Chapter VIII]{KMRT}.
\end{remark}

\begin{remark}
Given an Okubo algebra, note that for any  $x,y\in \cS$,
\[
\begin{split}
x*y&=\omega xy-\omega^2 yx-\frac{\omega-\omega^2}{3}\tr(xy)1,\\
y*x&=\omega yx-\omega^2 xy-\frac{\omega-\omega^2}{3}\tr(xy)1,
\end{split}
\]
so that
\[
\omega x*y+\omega^2 y*x=(\omega^2-\omega)xy-(\omega+\omega^2)\frac{\omega-\omega^2}{3}\tr(xy)1,
\]
and
\begin{equation}\label{eq:sym_xy}
xy=\frac{\omega}{\omega^2-\omega}x*y+\frac{\omega^2}{\omega^2-\omega}y*x+\frac{1}{3}\norm(x,y)1,
\end{equation}
so the product in $\cA$ is determined by the product in the Okubo algebra.

Also, as noted by Faulkner, the construction above is valid for separable alternative algebras of degree $3$.
\end{remark}

\begin{theorem}[\cite{EM91,EM93}]
Let $\FF$ be a field of characteristic not $3$.
\begin{itemize}
\item If $\FF$ contains a primitive cubic root $\omega$ of $1$, then the symmetric composition algebras of dimension $\geq 2$ are, up to isomorphism, the algebras $(\cA_0,*,\norm)$  for $\cA$ a separable alternative algebra of degree $3$.

    Two such symmetric composition algebras are isomorphic if and only if so are the corresponding alternative algebras.

\item If $\FF$ does not contain primitive cubic roots of $1$, then the symmetric composition algebras of dimension $\geq 2$ are, up to isomorphism, the algebras $\bigl(K(\cA,J)_0,*,\norm\bigr)$ for $\cA$ a separable alternative algebra of degree $3$ over $\KK=\FF[\omega]$, and $J$ a $\KK/\FF$-involution of the second kind.

    Two such symmetric composition algebras are isomorphic if and only if so are the corresponding alternative algebras, as algebras with involution.
\end{itemize}
\end{theorem}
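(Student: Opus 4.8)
The plan is to invert the construction of Examples~\ref{ex:Okubo}: from a symmetric composition algebra $(\cS,*,\norm)$ of dimension $\ge 2$ one constructs a separable alternative algebra of degree $3$ whose trace-zero part, equipped with the Okubo product, is $(\cS,*,\norm)$ again. Two preliminary observations reduce everything to surjectivity of this correspondence. First, the construction of Examples~\ref{ex:Okubo} makes sense, and yields a symmetric composition algebra, for any separable alternative algebra $\cA$ of degree $3$ with $\omega\in\FF$, not only for an associative one: the manipulations there use only that any two elements of $\cA$ lie in an associative subalgebra and that $\cA$ satisfies a generic cubic $x^3-\tr(x)x^2+s(x)x-\det(x)1=0$ --- this is Faulkner's remark. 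Second, the correspondence is functorial: an isomorphism $\cA_1\to\cA_2$ of degree-$3$ algebras preserves the generic trace, hence restricts to an isomorphism $\cA_{1,0}\to\cA_{2,0}$ compatible with $*$ and $\norm$; conversely, since $\cA_i=\FF1\oplus\cA_{i,0}$ and \eqref{eq:sym_xy} recovers the multiplication of $\cA_i$ from $(\cA_{i,0},*,\norm)$, any isomorphism of the symmetric composition algebras lifts to one of the degree-$3$ algebras. (In the twisted setting below the same reasoning applies, isomorphisms then being required to intertwine the involutions.) So it remains to show that every $(\cS,*,\norm)$ occurs.

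Assume first $\omega\in\FF$. On $\cA:=\FF1\oplus\cS$ one puts the bilinear product making $1$ a two-sided unit and, for $x,y\in\cS$,
\[
x\diamond y:=\frac{\omega}{\omega^2-\omega}\,x*y+\frac{\omega^2}{\omega^2-\omega}\,y*x+\frac13\norm(x,y)1 ,
\]
copied from \eqref{eq:sym_xy}, and then proves that $(\cA,\diamond)$ is a separable alternative algebra of degree $3$ with $\cA_0=\cS$. The tools are the identities of Theorem~\ref{th:symmetric_comp}(b), $(x*y)*x=x*(y*x)=\norm(x)y$ --- whence $x*x^{*2}=x^{*2}*x=\norm(x)x$ together with the linearizations $(x*y)*z+(z*y)*x=\norm(x,z)y$ and $x*(x*y)+y*x^{*2}=\norm(x,y)x$ --- and the defining identity $\norm(x*y,z)=\norm(x,y*z)$. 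With these one: (i) computes $x^{\diamond2}$ and $x^{\diamond3}$ for $x\in\cS$ and obtains a relation $x^{\diamond3}=f(x)1-\norm(x)x$ for a cubic form $f$ on $\cS$, so that, $1$ being a unit, $\cA$ has degree $\le 3$ and $\tr(x)=0$ on $\cS$ (the cubic has no $x^{\diamond2}$ term), hence $\cA_0=\cS$; moreover the degree is exactly $3$, the drop to degree $\le 2$ being excluded precisely because $\dim_\FF\cS\ge 2$ (it would force $x^{*2}\in\FF x$ for all $x\in\cS$, which pins down $\dim_\FF\cS=1$); (ii) shows $(\cA,\diamond)$ is flexible, $x\diamond(y\diamond x)=(x\diamond y)\diamond x$, by expanding through the formula and collapsing the outcome with the identities above (the $\norm(-,-)1$ contributions cancel precisely because $\norm(x,x*y)=\norm(x*y,x)=\norm(x,y*x)$), and similarly that it is left alternative, $x\diamond(x\diamond y)=x^{\diamond2}\diamond y$; since flexibility gives $(a,b,c)=-(c,b,a)$ for associators, left alternativity then forces right alternativity, so $\cA$ is alternative; (iii) observes that for $x,y\in\cS$ the form $\tr(x\diamond y)$ is a nonzero scalar multiple of $\norm(x,y)$ while $\tr(1\diamond1)=3\ne0$, so that nonsingularity of $\norm$ is equivalent to nonsingularity of the generic trace form of $\cA$, i.e.\ to separability of $\cA$. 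Finally, applying the construction of Examples~\ref{ex:Okubo} to $(\cA,\diamond)$ returns $(\cS,*,\norm)$ --- this is \eqref{eq:sym_xy} read in the other direction --- which establishes the first bullet.

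Now suppose $\omega\notin\FF$. Set $\KK=\FF[\omega]$, a separable quadratic extension with Galois automorphism $\sigma$, and extend scalars. Then $(\cS\otimes_\FF\KK,*,\norm)$ is a symmetric composition $\KK$-algebra containing $\omega$, hence, by the case just settled, equals $(\cB_0,*,\norm)$ for a separable alternative $\KK$-algebra $\cB$ of degree $3$, with $\cB=\KK1\oplus(\cS\otimes_\FF\KK)$ and $\diamond$-product as above. Consider the $\FF$-linear map $J\colon\mu1+z\mapsto\sigma(\mu)1-(\id_\cS\otimes\sigma)(z)$ on $\cB$; it has order $2$, and since $\diamond$ is built from $*$, $\norm$ and the scalar $\omega$, on which $\sigma$ acts by $\omega\mapsto\omega^{-1}$, a direct check shows $J$ is an \emph{anti}-automorphism of $(\cB,\diamond)$ --- that is, a $\KK/\FF$-involution of the second kind --- with $K(\cB,J)_0=\cS$. (Over $\FF=\RR$ with $\cB=\Mat_3(\CC)$ this $J$ is the conjugate transpose and $K(\cB,J)_0=\frsu_3$, Okubo's pseudo-octonions.) Functoriality, now forcing isomorphisms to commute with $J$, yields the second bullet.

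The crux is step (ii): showing $\diamond$ alternative. Every identity one needs is a formal consequence of $(x*y)*x=x*(y*x)=\norm(x)y$ and $\norm(x*y,z)=\norm(x,y*z)$, but organising these into the alternative laws is a genuine calculation, and it is exactly here that $\chr\FF\ne3$ enters, through the $\tfrac13$ in the formula for $\diamond$. Faulkner's insight that one should aim for an \emph{alternative} --- rather than merely flexible or power-associative --- algebra of degree $3$ is what keeps the bookkeeping manageable, and it is what lets the para-Hurwitz algebras ($\cA\cong\FF\times\cC$ for a Hurwitz algebra $\cC$, with $\cA_0$ the associated para-Hurwitz algebra) and the Okubo algebras ($\cA$ central simple of degree $3$, or its $\KK/\FF$-twist) be produced uniformly.
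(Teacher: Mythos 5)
Your proposal follows essentially the same route as the paper's own sketch: reverse Okubo's construction by defining $\cA=\FF 1\oplus\cS$ (resp.\ $\KK 1\oplus(\KK\otimes\cS)$ with the semilinear involution $J$ when $\omega\notin\FF$) with the product recovered from \eqref{eq:sym_xy}, show it is a separable alternative algebra of degree $3$, and deduce the isomorphism correspondence from the fact that each construction inverts the other. The paper leaves the verification of alternativity, degree $3$ and separability as unproved assertions, so your version is simply a more detailed account of the identical argument.
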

\noindent\emph{Sketch of proof:}\quad
We can go in the reverse direction of Okubo's construction. Given a symmetric composition algebra $(\cS,*,\norm)$ over a field containing $\omega$, define the algebra $\cA=\FF 1\oplus \cS$ with multiplication determined by the formula \eqref{eq:sym_xy}.
 Then $\cA$ turns out to be a separable alternative algebra of degree $3$.

In case $\omega\not\in\FF$, then we must consider $\cA=\FF[\omega]1\oplus\bigl(\FF[\omega]\otimes \cS\bigr)$, with the same formula for the product. In $\FF[\omega]$ we have the Galois automorphism $\omega^\tau=\omega^2$. Then the conditions $J(1)=1$ and $J(s)=-s$ for any $s\in \cS$ induce a $\FF[\omega]/\FF$-involution of the second kind in $\cA$. \qed

\begin{corollary}\label{co:symmetric_not3}
The algebras in Examples \ref{ex:Okubo} essentially exhaust, up to isomorphism, the symmetric composition algebras over a field $\FF$ of characteristic not $3$.
\end{corollary}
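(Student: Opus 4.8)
The plan is to combine the classification theorem of \cite{EM91,EM93} stated above with the Wedderburn structure theory of separable alternative algebras of degree $3$: the theorem reduces the corollary to listing these algebras $\cA$ and, for each, recognizing $\cA_{0}$ (or $K(\cA,J)_{0}$) as one of the algebras of Examples \ref{ex:Okubo}. First I would dispose of the trivial cases. By Theorem \ref{th:symmetric_comp} the dimension of $\cS$ is $1$, $2$, $4$ or $8$, and the one-dimensional symmetric composition algebra is (up to isomorphism) the ground field with $\norm(\alpha)=\alpha^{2}$, i.e.\ the para-Hurwitz algebra of $\FF$. So assume $\dim_{\FF}\cS\geq 2$ and, adjoining $\omega$ if necessary, apply the preceding theorem: $\cS\cong\cA_{0}$ for a separable alternative algebra $\cA$ of degree $3$ over $\FF$ (case $\omega\in\FF$), or $\cS\cong K(\cA,J)_{0}$ for such an $\cA$ over $\KK=\FF[\omega]$ equipped with a $\KK/\FF$-involution $J$ of the second kind (case $\omega\notin\FF$).

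Next I would run the structure theory over a field containing $\omega$. A separable alternative algebra is a finite product of simple algebras --- central simple associative algebras or Cayley algebras --- and the (reduced) degree of a product is the sum of the degrees of its factors; since Cayley, quaternion and quadratic étale algebras all have degree $2$, the condition ``degree $3$'' forces $\cA$ into exactly one of three shapes: a central simple associative algebra of degree $3$; a product $\FF\times\cC$ with $\cC$ a Hurwitz algebra of dimension $2$, $4$ or $8$ (the case $\cC=\FF\times\FF$ being $\cA=\FF^{3}$); or a cubic field extension of $\FF$. In the first case $\cA_{0}$, with the product and norm of Examples \ref{ex:Okubo}, is an Okubo algebra by definition. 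In the second case I would carry out the computation identifying $\cA_{0}$ with the para-Hurwitz algebra $(\cC,\bullet)$, $x\bullet y=\overline{x}\cdot\overline{y}$: restrict the product $x*y=\omega xy-\omega^{2}yx-\tfrac{\omega-\omega^{2}}{3}\tr(xy)1$ to the trace-zero slice of $\FF\times\cC$, simplify with $1+\omega+\omega^{2}=0$, $(\omega-\omega^{2})^{2}=-3$ and the linearized Cayley--Hamilton identity $cd+dc=\tr(c)d+\tr(d)c-\norm(c,d)1$, and produce the correct linear isomorphism onto $(\cC,\bullet)$. In the third case $\cA_{0}$ is a two-dimensional symmetric composition algebra with no nonzero idempotent (an idempotent of $\cA_{0}$ would satisfy a quadratic equation over $\FF$, hence lie in a subfield of $\cA$ of degree $1$ or $2$; the first is excluded by $\FF1\cap\cA_{0}=0$ and the second by $[\cA:\FF]=3$), so it is not a para-Hurwitz algebra and, being two-dimensional, not one of the Okubo algebras of Examples \ref{ex:Okubo}: these algebras are exactly what the word ``essentially'' concedes, and they are re-absorbed if one adopts Faulkner's broader usage (the remark following Examples \ref{ex:Okubo}) of calling $\cA_{0}$ an Okubo algebra for \emph{every} separable alternative $\cA$ of degree $3$.

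Finally I would handle the case $\omega\notin\FF$ by descent, classifying the pairs $(\cA,J)$ with $\cA$ a separable alternative $\KK$-algebra of degree $3$ and $J$ a $\KK/\FF$-involution of the second kind. This yields the mirror trichotomy: $\cA$ central simple associative of degree $3$ with $J$ of the second kind (an Okubo algebra as in Examples \ref{ex:Okubo}); $\cA\cong\KK\otimes_{\FF}(\FF\times\cC)$ with $J$ the Galois conjugation on $\KK$ extended trivially to $\FF\times\cC$, so that $K(\cA,J)_{0}$ is the $\FF$-span of $(\omega-\omega^{2})\otimes\cC_{0}$ and the computation of the second paragraph again gives the para-Hurwitz algebra of $\cC$; and $\cA\cong\KK\otimes_{\FF}\LL$ for a cubic field $\LL/\FF$, the $\omega\notin\FF$ avatar of the exceptional two-dimensional algebras. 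I expect the main obstacle to be precisely this bookkeeping with involutions of the second kind --- determining, for each of the three types of degree-$3$ separable alternative $\KK$-algebra, the admissible $J$ and verifying that fixing $\FF$ pointwise forces $K(\cA,J)_{0}$ to descend to $\FF$ with the claimed multiplication --- together with pinning down the correct isomorphism (not the naive trace-zero projection) in the para-Hurwitz identification of the second paragraph; the rest is routine manipulation of the cubic-root-of-unity relations.
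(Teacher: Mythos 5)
Your proposal is correct and follows the same route as the paper's own sketch: reduce via the classification theorem of \cite{EM91,EM93} to the trichotomy of separable alternative algebras of degree $3$ over $\KK=\FF[\omega]$ (central simple associative, $\KK\times\cC$ with $\cC$ a Hurwitz algebra, and $\KK\otimes_\FF\LL$ for a cubic field extension $\LL$), yielding respectively the Okubo algebras, the para-Hurwitz algebras, and the two-dimensional twisted forms that the word ``essentially'' accommodates. The one assertion you should retract is that in the case $\omega\notin\FF$ the involution $J$ on $\KK\times(\KK\otimes_\FF\cC)$ is necessarily the Galois conjugation extended trivially: an involution of the second kind may act on the Hurwitz factor by any compatible $\KK/\FF$-semilinear automorphism, and the resulting symmetric composition algebra is the para-Hurwitz algebra of the descended form $\widehat{\cC}=\{x\in\cC: J(x)=\overline{x}\}$, which need not be a scalar extension from $\FF$ of a predetermined algebra --- this is precisely the involution bookkeeping you correctly flag as the remaining work.
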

\noindent\emph{Sketch of proof:}\quad
Let $\omega$ be a primitive cubic root of $1$ in an algebraic closure of $\FF$, and let $\KK=\FF[\omega]$, so that $\KK=\FF$ if $\omega\in\FF$.
A separable alternative algebra over $\KK$ is, up to isomorphism, one of the following:
\begin{itemize}
\item a central simple associative algebra, and hence we obtain the Okubo algebras in Examples \ref{ex:Okubo}, 

\item $\cA=\KK\times\cC$ for a Hurwitz algebra $\cC$, in which case $(\cA_0,*,\norm)$ is shown to be isomorphic to the 
para-Hurwitz algebra attached
    to $\cC$ if $\KK=\FF$, and $\bigl(K(\cA,J)_0,*,\norm\bigr)$ to the para-Hurwitz algebra attached to $\widehat{\cC}=\{x\in \cC: J(x)=\overline{x}\}$ if $\KK\ne\FF$, 
    
\item
 $\cA=\KK\otimes_{\FF}\LL$, for a cubic field extension $\LL$ of $\FF$ (if $\omega\not\in\FF$, $\LL=\{x\in \cA: J(x)=x\}$), in which case the symmetric composition algebra is shown to be a twisted form of a two-dimensional para-Hurwitz algebra. \qedhere
\end{itemize}

One of the clues to understand symmetric composition algebras over fields of characteristic $3$ is the following result of Peterson \cite{Pet69} (dealing with $\chr\FF\neq 2,3$\,!).

\begin{theorem}\label{th:Pet}
Let $\FF$ be an algebraically closed field of characteristic $\ne 2,3$. Then any simple finite-dimensional algebra satisfying
\begin{equation}\label{eq:Pet}
(xy)x=x(yx),\quad ((xz)y)(xz)=\bigl(x((zy)z)\bigr)x
\end{equation}
for any $x,y,z$ is, up to isomorphism, one of the following:
\begin{itemize}
\item The algebra $(\cB,\bullet)$, where $(\cB,\cdot,\norm)$ is a Hurwitz algebra and $x\bullet y=\overline{x}\cdot\overline{y}$ (that is, a para-Hurwitz algebra).

\item The algebra $(\cC_s(\FF),*)$, where $\cC_s(\FF)$ is the split Cayley algebra, and $x*y=\varphi(\overline{x})\varphi^2(\overline{y})$, where $\varphi$ is a precise order $3$ automorphism of $\cC_s(\FF)$ given, in the basis in Figure \ref{fig:splitCayley} by 
\[
e_i\mapsto e_i,\ i=1,2,\qquad u_j\mapsto \omega^{j-1}u_j,\ v_j\mapsto \omega^{1-j}v_j,\ j=1,2,3,
\] 
where $\omega$ is a primitive cubic root of $1$.
\end{itemize}
\end{theorem}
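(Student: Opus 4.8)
The plan is to reduce the statement to the classification of symmetric composition algebras. Over an algebraically closed field of characteristic $\neq 2,3$ a primitive cubic root $\omega$ of $1$ lies in $\FF$, so by Corollary~\ref{co:symmetric_not3} every symmetric composition algebra of dimension $>1$ is an algebra $(\cA_0,*,\norm)$ attached to a separable alternative algebra $\cA$ of degree $3$ over $\FF$; over an algebraically closed field the only such $\cA$ are $\Mat_3(\FF)$ and $\FF\times\cC$ with $\cC$ a Hurwitz algebra of dimension $2$, $4$ or $8$ (there are no cubic field extensions of $\FF$), and one checks that these give exactly the para-Hurwitz algebras of dimensions $2,4,8$ and the split Okubo algebra, all of them simple. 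A direct computation in the basis of Figure~\ref{fig:splitCayley} identifies the split Okubo algebra with $(\cC_s(\FF),*)$, $x*y=\varphi(\overline x)\varphi^2(\overline y)$, for the order-$3$ automorphism $\varphi$ of the statement. Adjoining the one-dimensional algebra $\FF$ (a para-Hurwitz algebra), the simple symmetric composition algebras over such $\FF$ are precisely those listed in the theorem; and that all of them satisfy \eqref{eq:Pet} is the easy half, since in any symmetric composition algebra $(x*y)*x=x*(y*x)=\norm(x)y$ by Theorem~\ref{th:symmetric_comp}, and then multiplicativity of $\norm$ makes both sides of the second identity of \eqref{eq:Pet} equal to $\norm(x)\norm(z)\,y$. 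So the real content is the converse: \emph{a finite-dimensional simple algebra satisfying \eqref{eq:Pet} is a symmetric composition algebra.}

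To prove this I would use the operator $T_x:=R_xL_x\in\End_\FF(\cA)$. The first identity of \eqref{eq:Pet} is flexibility, so $T_x=L_xR_x$ as well; and substituting $w=xz$ into the second identity turns its two sides into $(wy)w=T_w(y)$ and $R_x\bigl(x\cdot R_zL_z(y)\bigr)=T_xT_z(y)$, so that
\[
T_{xz}=T_xT_z\qquad\text{for all }x,z\in\cA .
\]
Since $T_{\lambda x}=\lambda^2T_x$, polarization gives $T_{x,x'}T_y=T_{xy,x'y}$ and its symmetric variants, where $T_{a,b}:=R_aL_b+R_bL_a$; in particular the span $V$ of all $T_x$ and all $T_{a,b}$ is a subalgebra of $\End_\FF(\cA)$.

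The aim is then to show that $T_x$ is a scalar operator for every $x$, say $R_xL_x=\norm(x)\id$. Granting this, $\norm$ is a quadratic form; it is multiplicative because $\norm(xy)\id=T_{xy}=T_xT_y=\norm(x)\norm(y)\id$; it is not identically zero by simplicity (else $R_xL_x=0$ for all $x$); hence $\cA$ has anisotropic vectors, the computation underlying the implication (b)$\Rightarrow$(a) of Theorem~\ref{th:symmetric_comp} applies and yields in addition that $\norm$ is associative, so that its radical is a two-sided ideal and therefore $0$, whence $\norm$ is nondegenerate. Since moreover $(xy)x=\norm(x)y=x(yx)$, $(\cA,\cdot,\norm)$ is then a symmetric composition algebra, to which Corollary~\ref{co:symmetric_not3} applies, finishing the proof.

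The heart of the matter — and the step I expect to be hardest — is the scalarness of $T_x$, and this is where all of the hypotheses are spent. I would analyse the subalgebra $V\subseteq\End_\FF(\cA)$ above: since $\FF$ is algebraically closed and $\dim_\FF\cA<\infty$, Wedderburn/Burnside-type arguments combined with the relation $T_{xz}=T_xT_z$ should force the quadratic map $x\mapsto T_x$ to have one-dimensional image once simplicity is used to rule out proper invariant subspaces. The hypothesis $\chr\FF\neq 3$ is indispensable — the statement genuinely fails in characteristic $3$, where the Okubo algebras of Examples~\ref{ex:Okubo} must be replaced by others, which is exactly why the result is quoted as a clue for the characteristic-$3$ case — so at some point one must exploit $\omega\neq 1$ or division by $3$; and $\chr\FF\neq 2$ enters in the quadratic-form manipulations above and in the cubic identity $x^3=\tfrac12\tr(x^2)x+\det(x)1$ underlying the degree-$3$ bookkeeping. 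An alternative, closer to Petersson's original route, is to first find $a\in\cA$ with $L_a$ and $R_a$ invertible, deduce that $\cA$ is an isotope of a Hurwitz algebra, and then single out among such isotopes those satisfying \eqref{eq:Pet}; this meets the same core difficulty of pinning down the isotopy.
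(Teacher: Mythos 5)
First, a point of reference: the paper does not prove this theorem at all — it is quoted as Petersson's result \cite{Pet69} without proof — so your proposal is being measured against Petersson's original argument rather than anything in the text. Your overall strategy (show that a simple finite-dimensional algebra satisfying \eqref{eq:Pet} is a symmetric composition algebra, then invoke Corollary \ref{co:symmetric_not3}) is legitimate and not circular within the paper's logic, since the classification in characteristic $\neq 3$ does not rest on Theorem \ref{th:Pet}. Your reduction of the second identity of \eqref{eq:Pet} to $T_{xz}=T_xT_z$ with $T_x=R_xL_x=L_xR_x$ is correct, as is the ``easy half'' that symmetric composition algebras satisfy \eqref{eq:Pet}, and the bookkeeping showing that once $T_x=\norm(x)\id$ one gets a multiplicative, associative, nondegenerate $\norm$ is essentially fine.

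The proposal nevertheless has a genuine gap exactly where you flag it: the claim that $T_x$ is a scalar operator for every $x$ is never proved, and this is not a routine step — it is essentially the entire content of Petersson's paper (showing that the algebra is an isotope of a Hurwitz algebra and pinning down the isotopy). Moreover, the mechanism you gesture at points in the wrong direction: the span $V$ of the operators $T_x$ is indeed closed under composition by $T_{xz}=T_xT_z$ and its polarizations, but a Burnside-type argument applied to an irreducible action would force $V=\End_\FF(\cA)$, which is the opposite of the desired conclusion $\dim_\FF V=1$; and simplicity of $\cA$ as an algebra does not obviously make $\cA$ an irreducible, or even faithful, $V$-module. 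Nothing in the sketch explains how simplicity excludes $T_x$ having two distinct eigenvalues, nor where $\chr\FF\neq 3$ actually intervenes (you correctly note it must, since the conclusion fails in characteristic $3$). Until the scalarness of $T_x$ — equivalently, the existence of the quadratic form $\norm$ with $(xy)x=\norm(x)y$ — is established, the remaining steps have nothing to stand on, so the proof is incomplete at its central point.
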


Note that any symmetric composition algebra $(\cS,*,\norm)$ satisfies \eqref{eq:Pet} so the unique, up to isomorphism, Okubo algebra over an algebraically closed field of characteristic $\neq 2,3$ must be isomorphic to the last algebra in the Theorem above, and this seems to be the first appearance of these algebras in the literature! 

This motivates the next definition:

\begin{definition}[{\cite[\S 34.b]{KMRT}}]
Let $(\cC,\cdot,\norm)$ be a Hurwitz algebra, and let $\varphi\in \Aut(\cC,\cdot,\norm)$ be an automorphism with $\varphi^3=\id$.
The composition algebra $(\cC,*,\norm)$, with
\[
x*y=\varphi(\overline{x})\cdot\varphi^2(\overline{y})
\]
is called a \emph{Petersson algebra}, and denoted by $\cC_\varphi$.
\end{definition}

In case $\varphi=\id$, the Petersson algebra is the para-Hurwitz algebra associated to $(\cC,\cdot,\norm)$.

Modifying the automorphism in Theorem \ref{th:Pet}, consider the order $3$ automorphism $\varphi$ of the split Cayley algebra given by: 
\[
\varphi(e_i)=e_i,\ i=1,2,\qquad \varphi(u_j)=u_{j+1},\ \varphi(v_j)=v_{j+1}\ \text{(indices $j$ modulo $3$).}
\]
 With this automorphism we may define Okubo algebras over arbitrary fields (see \cite{EP96}).

\begin{definition}\label{df:split_Okubo}
Let $(\cC_s(\FF),\cdot,\norm)$ be the split Cayley algebra over an arbitrary field $\FF$. The Petersson algebra $\cC_s(\FF)_\varphi$ is called the \emph{split Okubo algebra} over $\FF$.

Its twisted forms (i.e., those composition algebras $(\cS,*,\norm)$ that become isomorphic to the split Okubo algebra after extending scalars to an algebraic closure) are called \emph{Okubo algebras}.
\end{definition}

In the basis in Figure \ref{fig:splitCayley}, the multiplication table of the split Okubo algebra is given in Figure \ref{fig:splitOkubo}.
\begin{figure}[h!]
\[
\vbox{\offinterlineskip
\halign{\hfil$#$\enspace\hfil&#\vreglon
 &\hfil\enspace$#$\enspace\hfil
 &\hfil\enspace$#$\enspace\hfil&#\vregleta
 &\hfil\enspace$#$\enspace\hfil
 &\hfil\enspace$#$\enspace\hfil&#\vregleta
 &\hfil\enspace$#$\enspace\hfil
 &\hfil\enspace$#$\enspace\hfil&#\vregleta
 &\hfil\enspace$#$\enspace\hfil
 &\hfil\enspace$#$\enspace\hfil&#\vreglon\cr
 &\omit\hfil\vrule width 1pt depth 4pt height 10pt
   &e_1&e_2&\omit&u_1&v_1&\omit&u_2&v_2&\omit&u_3&v_3&\omit\cr
 \noalign{\hreglon}
 e_1&&e_2&0&&0&-v_3&&0&-v_1&&0&-v_2&\cr
 e_2&&0&e_1&&-u_3&0&&-u_1&0&&-u_2&0&\cr
 &\multispan{12}{\hregletafill}\cr
 u_1&&-u_2&0&&v_1&0&&-v_3&0&&0&-e_1&\cr
 v_1&&0&-v_2&&0&u_1&&0&-u_3&&-e_2&0&\cr
 &\multispan{12}{\hregletafill}\cr
 u_2&&-u_3&0&&0&-e_1&&v_2&0&&-v_1&0&\cr
 v_2&&0&-v_3&&-e_2&0&&0&u_2&&0&-u_1&\cr
 &\multispan{12}{\hregletafill}\cr
 u_3&&-u_1&0&&-v_2&0&&0&-e_1&&v_3&0&\cr
 v_3&&0&-v_1&&0&-u_2&&-e_2&0&&0&u_3&\cr
 &\multispan{13}{\hreglonfill}\cr}}
\]
\caption{Multiplication table of the split Okubo algebra}\label{fig:splitOkubo}
\end{figure}

Over fields of characteristic $\neq 3$, our new definition of Okubo algebras coincide with the definition in Examples \ref{ex:Okubo}, due to Corollary \ref{co:symmetric_not3}. Okubo and Osborn \cite{OO81b} had given an ad hoc definition of the Okubo algebra over an algebraically closed field of characteristic $3$.

Note that the split Okubo algebra does not contain any nonzero element that commutes with every other element, that is, its commutative center is trivial. This is not so for the para-Hurwitz algebra, where the para-unit lies in the commutative center. 

Let $\FF$ be a field of characteristic $3$ and let $0\neq \alpha,\beta\in\FF$. Consider the elements 
\[
e_1\otimes\alpha^{1/3},\quad u_1\otimes\beta^{1/3}
\]
in $\cC_s(\FF)\otimes_\FF\overline{\FF}$ ($\overline{\FF}$ being an algebraic closure of $\FF$). These elements generate, by multiplication and linear combinations over $\FF$, a twisted form of the split Okubo algebra $(\cC_s(\FF),*,\norm)$. Denote by $\cO_{\alpha,\beta}$ this twisted form. 

The classification of the symmetric composition algebras in characteristic $3$, which completes the classification of symmetric composition algebras over fields, is the following \cite{Eld97} (see also \cite{CEKT}):

\begin{theorem}\label{th:symmetric_3}
Any symmetric composition algebra $(\cS,*,\norm)$ over a field $\FF$ of characteristic $3$ is either:
\begin{itemize}
\item A para-Hurwitz algebra. Two such algebras are isomorphic if and only if so are the associated Hurwitz algebras.
\item A two-dimensional algebra with a basis $\{u,v\}$ and multiplication given by
\[
u*u=v,\quad u*v=v*u=u,\quad v*v=\lambda u-v,
\]
for a nonzero scalar $\lambda\in\FF\setminus\FF^3$. These algebras do not contain idempotents and are twisted forms of the para-Hurwitz algebras. 

Algebras corresponding to the scalars $\lambda$ and $\lambda'$ are isomorphic if and only if $\FF^3\lambda+\FF^3(\lambda^2+1)=\FF^3\lambda'+\FF^3((\lambda')^2+1)$.
\item Isomorphic to $\cO_{\alpha,\beta}$ for some $0\neq\alpha,\beta\in\FF$. Moreover, $\cO_{\alpha,\beta}$ is isomorphic or antiisomorphic to $\cO_{\gamma,\delta}$ if and only if $\textrm{span}_{\FF^3}\left\{\alpha^{\pm 1},\beta^{\pm 1},\alpha^{\pm 1}\beta^{\pm 1}\right\}=\textrm{span}_{\FF^3}\left\{\gamma^{\pm 1},\delta^{\pm 1},\gamma^{\pm 1}\delta^{\pm 1}\right\}$.
\end{itemize}
\end{theorem}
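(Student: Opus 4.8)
The plan is to lift the problem to an algebraic closure and descend, while keeping track of the two features that are special to characteristic $3$: that $1$ is the only cube root of $1$, so that every order-$3$ automorphism of a Hurwitz algebra is unipotent; and that the group scheme of cube roots of unity is infinitesimal, so that the relevant automorphism group schemes need not be smooth. Over an algebraically closed field $\overline{\FF}$ of characteristic $3$ the symmetric composition algebras of dimension $>1$ are, up to isomorphism, the split para-Hurwitz algebras of dimensions $2$, $4$, $8$ together with the split Okubo algebra $\cC_s(\overline{\FF})_\varphi$ of Definition~\ref{df:split_Okubo} (the last by Okubo--Osborn), the one-dimensional case being the ground field, itself a para-Hurwitz algebra. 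Hence any symmetric composition algebra $(\cS,*,\norm)$ over $\FF$ is an $\overline{\FF}/\FF$-twisted form of one of these, and the classification reduces to computing the corresponding sets of forms and their isomorphism (and antiisomorphism) relations.

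I would handle the para-Hurwitz side through idempotents. If $\cS$ contains a nonzero idempotent $e$, then $\norm(e)=1$ (put $x=y=e$ in condition~(b) of Theorem~\ref{th:symmetric_comp}) and $L_e^*R_e^*=R_e^*L_e^*=\id$ (put $x=e$), so the modified Kaplansky product $x\diamond y\bydef(e*x)*(y*e)$ makes $(\cS,\diamond,\norm)$ a Hurwitz algebra with unit $e$; inverting this relation gives $x*y=(x*e)\diamond(e*y)$, and a short computation with~(b) shows that $\varphi\colon x\mapsto\overline{x*e}$ (conjugation taken in $(\cS,\diamond)$) is an automorphism of the Hurwitz algebra $(\cS,\diamond)$ with $\varphi^3=\id$ and $x*y=\varphi(\overline{x})\diamond\varphi^2(\overline{y})$ --- so $\cS$ is the Petersson algebra $(\cS,\diamond)_\varphi$. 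Since $\varphi^3=\id$ and $\chr\FF=3$, we get $(\varphi-\id)^3=0$, so $\varphi$ is unipotent. In dimensions $1$ and $2$ a Hurwitz algebra has no nontrivial order-$3$ automorphism, whence $\varphi=\id$ and $\cS$ is a para-Hurwitz algebra; in dimension $4$, where split quaternion algebras do admit unipotent order-$3$ automorphisms, one checks (using the freedom in the choice of $e$) that the resulting Petersson algebra is still isomorphic to a para-quaternion algebra; and in dimension $8$ the analysis of unipotent order-$3$ automorphisms of octonion algebras produces, besides the para-octonion algebras, exactly those algebras becoming isomorphic over $\overline{\FF}$ to $\cC_s(\overline{\FF})_\varphi$, i.e. the split Okubo algebra and those of its forms that still contain an idempotent. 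The isomorphism criterion for para-Hurwitz algebras is then Proposition~\ref{pr:Hurwitz_isomorphism} in disguise, an isomorphism of para-Hurwitz algebras being the same datum as an isometry of the norms of the underlying Hurwitz algebras.

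What is left are the algebras containing no idempotent, and here the two genuinely exotic families appear and the descent must be made explicit. In characteristic $3$ the automorphism group scheme of the split two-dimensional para-Hurwitz algebra is \emph{not} smooth --- its scheme of nonzero idempotents is the infinitesimal group scheme $\mu_3$ of cube roots of $1$ --- and its flat cohomology is what the scalar $\lambda$ records: a class amounts to a purely inseparable cubic datum, giving an idempotent-free algebra exactly when $\lambda\in\FF\setminus\FF^3$, and the equality $\FF^3\lambda+\FF^3(\lambda^2+1)=\FF^3\lambda'+\FF^3((\lambda')^2+1)$ is the resulting orbit relation, read inside $\FF$ viewed as a vector space over its subfield $\FF^3$. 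On the Okubo side the automorphism group scheme of the split Okubo algebra is, in characteristic $3$, far smaller than it is in characteristic $\ne 3$ (where it is of type $A_2$), and the algebra $\cO_{\alpha,\beta}$ --- generated over $\FF$ by $e_1\otimes\alpha^{1/3}$ and $u_1\otimes\beta^{1/3}$ inside $\cC_s(\FF)_\varphi\otimes_\FF\overline{\FF}$ --- is the form attached to the class determined by $(\alpha,\beta)$ in a rank-$2$ infinitesimal torus $\mu_3\times\mu_3$ of that group scheme; the criterion $\textrm{span}_{\FF^3}\{\alpha^{\pm1},\beta^{\pm1},\alpha^{\pm1}\beta^{\pm1}\}=\textrm{span}_{\FF^3}\{\gamma^{\pm1},\delta^{\pm1},\gamma^{\pm1}\delta^{\pm1}\}$ is the orbit relation on these classes under the normalizer of that torus, whose outer part is what brings in antiisomorphisms alongside isomorphisms.

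The main obstacle is exactly this last step: computing the flat cohomology with coefficients in the infinitesimal group schemes (built from $\mu_3$ and its additive relatives) that govern characteristic $3$, and translating the resulting classes back into the explicit multiplication tables and the deceptively asymmetric isomorphism criteria. This is the precise point at which characteristic $3$ departs from Corollary~\ref{co:symmetric_not3}, where the automorphism group schemes are smooth and everything collapses to ordinary Galois cohomology of Hurwitz norms and of separable alternative algebras of degree $3$.
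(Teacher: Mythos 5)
Your overall strategy is the right one, and it is the one the survey itself points to (the paper gives no proof of this theorem; it cites \cite{Eld97} and \cite{CEKT}, whose line of attack your plan mirrors): split off the algebras containing a nonzero idempotent, show these are Petersson algebras $\cC_\varphi$ with $\varphi^3=\id$, hence $(\varphi-\id)^3=0$ in characteristic $3$, and treat the idempotent-free algebras --- which exist only over imperfect fields --- by explicit descent. The first half of your argument (the modified Kaplansky product based at an idempotent $e$ with $\norm(e)=1$, the recovery $x*y=(x*e)\diamond(e*y)$, the unipotence of $\varphi$, and the absence of nontrivial order-$3$ automorphisms in dimensions $1$ and $2$) is sound.

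The genuine gaps are exactly where you place the weight. First, the dimension-$4$ step (``one checks that the resulting Petersson algebra is still isomorphic to a para-quaternion algebra'') is a real computation, not a formality: split quaternion algebras in characteristic $3$ do admit nontrivial unipotent order-$3$ automorphisms, so you must actually produce a para-unit of the resulting Petersson algebra, and nothing in your text does this; the analogous dichotomy in dimension $8$ (para-octonion versus Okubo) is likewise only asserted. Second, and more seriously, the entire idempotent-free classification --- the existence and exhaustiveness of the two-dimensional family $u*u=v$, $u*v=v*u=u$, $v*v=\lambda u-v$ with $\lambda\in\FF\setminus\FF^3$, the fact that every idempotent-free Okubo algebra is some $\cO_{\alpha,\beta}$, and both isomorphism criteria --- is precisely the content of the theorem, and you reduce it to ``computing the flat cohomology with coefficients in the infinitesimal group schemes and translating back,'' which you yourself identify as the main obstacle and do not carry out. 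Identifying the relevant infinitesimal group schemes ($\mu_3$ for the two-dimensional case, a rank-two multiplicative subgroup scheme and its normalizer for the Okubo case) is a correct framework, but the statement's specific invariants, $\FF^3\lambda+\FF^3(\lambda^2+1)$ and $\mathrm{span}_{\FF^3}\{\alpha^{\pm1},\beta^{\pm1},\alpha^{\pm1}\beta^{\pm1}\}$, only emerge from that computation. As written, the proposal is a correct roadmap rather than a proof.
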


A more precise statement for the isomorphism condition in the last item is given in \cite{Eld97}. A key point in the proof of this Theorem is the study of idempotents on Okubo algebras. If there are nonzero idempotents, then these algebras are Petersson algebras. The most difficult case appears in the absence of idempotents. This is only possible if the ground field $\FF$ is not perfect.

\bigskip

\section{Triality}\label{se:triality}

The importance of symmetric composition algebras lies in their connections with the phenomenon of triality in dimension $8$, related to the fact that the Dynkin diagram $D_4$ is the most symmetric one. The details of much of what follows can be consulted in \cite[Chapter VIII]{KMRT}.

Let $(\cS,*,\norm)$ be an eight-dimensional symmetric composition algebra over a field $\FF$, that is, $\cS$ is either a para-Hurwitz algebra or an Okubo algebra. Write $L_x(y)=x*y=R_y(x)$ as usual. Then, due to Theorem \ref{th:symmetric_comp}, $L_xR_x=R_xL_x=\norm(x)\id$ for any $x\in \cS$ so that, inside $\End_\FF(\cS\oplus\cS)\simeq \Mat_2\bigl(\End_\FF(\cS)\bigr)$ we have
\[
\begin{pmatrix}0&L_x\\ R_x&0\end{pmatrix}^2=\norm(x)\id.
\]
Therefore, the map 
\[
x\mapsto \begin{pmatrix}0&L_x\\ R_x&0\end{pmatrix}
\]
extends to an isomorphism of associative algebras with involution:
\[
\Phi:\left(\Cl(\cS,\norm),\tau\right)\longrightarrow \left(\End_\FF(\cS\oplus\cS),\sigma_{n\perp n}\right)
\]
where $\Cl(\cS,\norm)$ is the Clifford algebra on the quadratic space $(\cS,\norm)$, $\tau$ is its canonical involution ($\tau(x)=x$ for any $x\in\cS$), and $\sigma_{n\perp n}$ is the orthogonal involution  on $\End_\FF(\cS\oplus\cS)$ induced by the quadratic form where the two copies of $\cS$ are orthogonal and the restriction on each copy coincides with the norm. The multiplication in the Clifford algebra will be denoted by juxtaposition.

Consider the \emph{spin group}:
\[
\Spin(\cS,n)=\{ u\in \Cl(\cS,\norm)\subo^\times: u x u^{-1}\in \cS,\ u \tau(u)=1,\ \forall x\in \cC\}.
\]
For any $u\in \Spin(\cS,n)$,
\[
\Phi(u)=\begin{pmatrix} \rho_u^-&0\\ 0&\rho_u^+\end{pmatrix}
\]
for some $\rho_u^\pm\in \Ort(\cS,n)$  such that
\[
\chi_u(x*y)=\rho_u^+(x)*\rho_u^-(y)
\]
for any $x,y\in \cS$, where $\chi_u(x)=u x u^{-1}$
 gives the natural representation of $\Spin(\cS,\norm)$, while $\rho_u^{\pm 1}$ give the two half-spin representations, and the formula above links the three of them!

The last condition is equivalent to:
\[
\langle \chi_u(x),\rho_u^+(y),\rho_u^-(z)\rangle =\langle x,y,z\rangle
\]
for any $x,y,z\in \cS$, where
\[
\langle x,y,z\rangle =\norm(x,y*z),
\]
and this has cyclic symmetry:
\[
\langle x,y,z\rangle=\langle y,z,x\rangle.
\]

\begin{theorem}
Let $(\cS,*,\norm)$ be an eight-dimensional symmetric composition algebra. Then:
\[
\Spin(\cS,\norm)\simeq \{(f_0,f_1,f_2)\in \Ort^+(\cS,n)^3:
 f_0(x*y)=f_1(x)*f_2(y)\ \forall x,y\in \cS\}.
\]
Moreover, the set of \emph{related triples} (the set on the right hand side) has cyclic symmetry.
\end{theorem}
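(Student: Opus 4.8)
The plan is to produce an explicit isomorphism. For $u\in\Spin(\cS,\norm)$ let $\chi_u$ and $\rho_u^\pm$ be the maps attached to $u$ via $\Phi$ as above, and set
\[
\Psi\colon\Spin(\cS,\norm)\longrightarrow\Ort(\cS,\norm)^3,\qquad u\longmapsto(\chi_u,\rho_u^+,\rho_u^-).
\]
I claim $\Psi$ is an injective group homomorphism onto the set of related triples. Most of this is immediate from what has already been set up. First, $\chi_u\in\Ort^+(\cS,\norm)$ is the standard property of the vector representation of $\Spin$, and $\rho_u^\pm\in\Ort(\cS,\norm)$: since $\Phi$ carries $\tau$ to $\sigma_{n\perp n}$, the relation $u\tau(u)=1$ gives $\Phi(u)^{-1}=\sigma_{n\perp n}\bigl(\Phi(u)\bigr)$, so $\Phi(u)$ is orthogonal for $\norm\perp\norm$, and being block-diagonal, $\Phi(u)=\mathrm{diag}(\rho_u^-,\rho_u^+)$, for the orthogonal sum $\cS\oplus\cS$, each diagonal block is an isometry of $(\cS,\norm)$. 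The identity $\chi_u(x*y)=\rho_u^+(x)*\rho_u^-(y)$ recorded above says exactly that $\Psi(u)$ is a related triple. For the homomorphism property, $\chi_{uv}=\chi_u\chi_v$ is clear, while $\Phi(uv)=\Phi(u)\Phi(v)$ and the product rule for block-diagonal matrices give $\rho_{uv}^\pm=\rho_u^\pm\rho_v^\pm$; one also checks that the related triples form a subgroup of $\Ort(\cS,\norm)^3$ under componentwise composition (closure and inverses follow by substituting $f_i^{-1}$ into $f_0(x*y)=f_1(x)*f_2(y)$). Injectivity is trivial: $\rho_u^+=\rho_u^-=\id$ forces $\Phi(u)=\id$, hence $u=1$ since $\Phi$ is an isomorphism.

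Next I would translate the defining identity into the language of the trilinear form $\langle x,y,z\rangle=\norm(x,y*z)$. Pairing $f_0(x*y)=f_1(x)*f_2(y)$ with an arbitrary $z$ and using that $f_0$ is an isometry turns it into
\[
\langle f_1(x),f_2(y),f_0(z)\rangle=\langle x,y,z\rangle\qquad(x,y,z\in\cS).
\]
Since $\langle\,\cdot\,,\,\cdot\,,\,\cdot\,\rangle$ is cyclically symmetric, this condition is unchanged under $(f_0,f_1,f_2)\mapsto(f_1,f_2,f_0)$; that is already the last assertion of the theorem. It also shows that a related triple satisfies both rotated identities $f_1(x*y)=f_2(x)*f_0(y)$ and $f_2(x*y)=f_0(x)*f_1(y)$, whence, substituting $y=f_1^{-1}(w)$ in the last one and $x=f_2^{-1}(w)$ in the other,
\[
f_2\,L_x\,f_1^{-1}=L_{f_0(x)},\qquad f_1\,R_x\,f_2^{-1}=R_{f_0(x)}\qquad(x\in\cS).
\]

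Surjectivity is the substantive point. Given a related triple $(f_0,f_1,f_2)$, put $g=\mathrm{diag}(f_2,f_1)\in\End_\FF(\cS\oplus\cS)$ and $u=\Phi^{-1}(g)$; since $g$ is block-diagonal, $u$ lies in the even part $\Cl(\cS,\norm)\subo^\times$. From $\sigma_{n\perp n}(g)=\mathrm{diag}(f_2^{-1},f_1^{-1})=g^{-1}$ one gets $u\tau(u)=1$, and the two operator identities above give
\[
\Phi(u)\Phi(x)\Phi(u)^{-1}=\begin{pmatrix}0&f_2L_xf_1^{-1}\\ f_1R_xf_2^{-1}&0\end{pmatrix}=\begin{pmatrix}0&L_{f_0(x)}\\ R_{f_0(x)}&0\end{pmatrix}=\Phi\bigl(f_0(x)\bigr),
\]
so $uxu^{-1}=f_0(x)\in\cS$ for every $x$. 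Hence $u\in\Spin(\cS,\norm)$, with $\chi_u=f_0$, $\rho_u^+=f_1$, $\rho_u^-=f_2$, i.e.\ $\Psi(u)=(f_0,f_1,f_2)$. Thus $\Psi$ is a bijective homomorphism onto the related triples lying in $\Ort(\cS,\norm)^3$, hence an isomorphism of groups. To see it is onto the set in the statement, note these triples automatically have all entries in $\Ort^+$: the first entry of any of them is some $\chi_u$, which is proper, and by the cyclic symmetry already established so are the second and third.

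I expect the surjectivity step to be the only delicate one: one must keep straight which of $f_1,f_2$ occupies which diagonal block of $g$, and which cyclically rotated form of $f_0(x*y)=f_1(x)*f_2(y)$ is used at each point in verifying $uxu^{-1}=f_0(x)$. Everything else is formal once the trilinear-form reformulation — and with it the cyclic symmetry of related triples — is in place.
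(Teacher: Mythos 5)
Your proof is correct and follows exactly the route the paper sets up in the paragraphs preceding the theorem (the paper itself omits the proof, deferring to the Book of Involutions): the isomorphism $\Phi$, the block-diagonal form of $\Phi(u)$ for $u$ in the even Clifford algebra, and the cyclic symmetry of $\langle x,y,z\rangle=\norm(x,y*z)$. The surjectivity step --- pulling a related triple back to $u=\Phi^{-1}\bigl(\mathrm{diag}(f_2,f_1)\bigr)$ and verifying the three defining conditions of $\Spin(\cS,\norm)$ via the rotated identities $f_2L_xf_1^{-1}=L_{f_0(x)}$ and $f_1R_xf_2^{-1}=R_{f_0(x)}$ --- is the substantive point, and you carry it out correctly, including the bookkeeping of which of $f_1,f_2$ occupies which diagonal block.
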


The cyclic symmetry on the right hand side induces an outer automorphism of order $3$ (\emph{trialitarian automorphism}) of $\Spin(\cS,\norm)$. Its fixed subgroup is the group of automorphisms of the symmetric composition algebra $(\cS,*,\norm)$, which is a simple algebraic group of type $G_2$ in the para-Hurwitz case, and of type $A_2$ in the Okubo case if $\chr\FF\neq 3$. 

The group(-scheme) of automorphisms of an Okubo algebra over a field of characteristic $3$ is not smooth \cite{CEKT}.

\smallskip

At the Lie algebra level, assume $\chr\FF\ne 2$, and consider the associated orthogonal Lie algebra
\[
\frso(\cS,\norm)=\{ d\in\End_{\FF}(\cS): \norm\bigl(d(x),y\bigr)+\norm\bigl(x,d(y)\bigr)=0\ \forall x,y\in \cS\}.
\]
The \emph{triality Lie algebra} of $(\cS,*,\norm)$ is defined as the following Lie subalgebra of $\frso(\cS,\norm)^3$ (with componentwise bracket):
\[
\tri(\cS,*,\norm)=\{(d_0,d_1,d_2)\in\frso(\cS,\norm)^3: d_0(x*y)=d_1(x)*y+x*d_2(y)\ \forall x,y,z\in \cS\}.
\]

Note that the condition $d_0(x*y)=d_1(x)*y+x*d_2(y)$ for any $x,y\in \cS$ is equivalent to the condition
\[
\norm\bigl(x*y,d_0(z)\bigr)+\norm\bigl(d_1(x)*y,z\bigr)+\norm\bigl(x*d_2(y),z\bigr)=0,
\]
for any $x,y,z\in \cS$. But $\norm(x*y,z)=\norm(y*z,x)=\norm(z*x,y)$. Therefore, the linear map:
\[
\begin{split}
\theta: \tri(\cS,*,\norm)&\longrightarrow \tri(\cS,*,\norm)\\
  (d_0,d_1,d_2)&\mapsto (d_2,d_0,d_1),
\end{split}
\]
is an automorphism of the Lie algebra $\tri(\cS,*,\norm)$.

\begin{theorem}
Let $(\cS,*,\norm)$ be an eight-dimensional symmetric composition algebra over a field of characteristic $\ne 2$. Then:
\begin{itemize}
\item \textup{\textbf{Principle of Local Triality:}}\quad The projection map:
\[
\begin{split}
\pi_0: \tri(\cS,*,\norm)&\longrightarrow \frso(\cS,\norm)\\
  (d_0,d_1,d_2)&\mapsto\ d_0
\end{split}
\]
is an isomorphism of Lie algebras.

\item For any $x,y\in \cS$, the triple
\[
t_{x,y}=\Bigl(\sigma_{x,y}=\norm(x,.)y-\norm(y,.)x,
\frac{1}{2}\norm(x,y)id-R_xL_y,\frac{1}{2}\norm(x,y)id-L_xR_y\Bigr)
\]
belongs to $\tri(\cS,*,\norm)$, and $\tri(\cS,*,\norm)$ is spanned by these elements. Moreover, for any $a,b,x,y\in \cS$:
\[
[t_{a,b},t_{x,y}]=t_{\sigma_{a,b}(x),y}+t_{x,\sigma_{a,b}(y)}.
\]
\end{itemize}
\end{theorem}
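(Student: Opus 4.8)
The plan is to establish three things in sequence: first, that the projection $\pi_0$ is injective; second, that the triples $t_{x,y}$ genuinely lie in $\tri(\cS,*,\norm)$; and third, that they span, which simultaneously yields surjectivity of $\pi_0$ and the closure of their span under bracket. For injectivity of $\pi_0$, suppose $(0,d_1,d_2)\in\tri(\cS,*,\norm)$, so $0=d_1(x)*y+x*d_2(y)$ for all $x,y$. Setting $y$ to range over all of $\cS$ and using that $L_x$ is bijective whenever $\norm(x)\neq 0$, I would first extract $d_1(x)*y=-x*d_2(y)$; then applying the identity $(x*y)*x=\norm(x)y$ of Theorem \ref{th:symmetric_comp}(b) together with a substitution of the form $x*(-)$ on both sides, one isolates $d_1$ in terms of $d_2$ and then derives $d_1(x)*y=0$ for all $y$ for non-isotropic $x$, hence $d_1(x)=0$; since non-isotropic elements span $\cS$, $d_1=0$, and symmetrically $d_2=0$. (Equivalently, one can invoke the isomorphism $\Phi$ with the Clifford algebra: $\Spin$ has the same dimension as $\SOrt_8$, and the kernel computation at the group level mirrors the Lie algebra statement.)

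Next I would verify $t_{x,y}\in\tri(\cS,*,\norm)$. The first component $\sigma_{x,y}=\norm(x,\cdot)y-\norm(y,\cdot)x$ is visibly in $\frso(\cS,\norm)$; that the other two components are skew follows from $L_x^*=R_x$ and $R_x^*=L_x$ (the defining symmetry of $\cS$), so that $(R_xL_y)^*=L_y^*R_x^*=R_yL_x$ and hence $\tfrac12\norm(x,y)\id-R_xL_y$ is skew exactly because $\norm(x,y)\id-R_xL_y-R_yL_x$ is symmetric with the right trace — here I would use the linearization of $(a*b)*a=\norm(a)b$ to get $R_xL_y+R_yL_x=\norm(x,y)\id$ on the nose, which makes skewness transparent. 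The trialitarian relation $\sigma_{x,y}(a*b)=(\tfrac12\norm(x,y)\id-R_xL_y)(a)*b+a*(\tfrac12\norm(x,y)\id-L_xR_y)(b)$ is then a direct computation: expand the left side using bilinearity of $\norm(\cdot,\cdot)$, and on the right side use the identities $(a*b)*a=\norm(a)b$, $a*(b*a)=\norm(a)b$ and their linearizations to convert the $R_xL_y$ and $L_xR_y$ terms into expressions involving $\norm(x,a*b)$, $\norm(y,a*b)$ and so forth; the cyclic symmetry $\norm(x*y,z)=\norm(y*z,x)=\norm(z*x,y)$ is the bookkeeping device that makes both sides match.

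For the spanning statement and the bracket formula, I would argue that the set $\{t_{x,y}\}$ spans a subalgebra: the bracket formula $[t_{a,b},t_{x,y}]=t_{\sigma_{a,b}(x),y}+t_{x,\sigma_{a,b}(y)}$ is proved by computing the first components (where it reduces to the standard fact that $[\sigma_{a,b},\sigma_{x,y}]=\sigma_{\sigma_{a,b}(x),y}+\sigma_{x,\sigma_{a,b}(y)}$ in $\frso$, since the $\sigma$'s are the usual rank-$\leq 2$ skew maps) and then invoking injectivity of $\pi_0$ to conclude equality of the full triples — this is the slick point: once $\pi_0$ is known injective, any identity among triples in $\tri$ need only be checked in the zeroth component. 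Finally, since $\sigma_{x,y}$ for $x,y$ ranging over $\cS$ already span all of $\frso(\cS,\norm)$, the image $\pi_0(\mathrm{span}\{t_{x,y}\})$ is all of $\frso(\cS,\norm)$; combined with injectivity this forces $\mathrm{span}\{t_{x,y\}}=\tri(\cS,*,\norm)$ and $\pi_0$ to be an isomorphism.

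\textbf{Main obstacle.} The routine-looking but genuinely delicate step is the verification that $t_{x,y}$ satisfies the trialitarian identity $d_0(a*b)=d_1(a)*b+a*d_2(b)$; the coefficient $\tfrac12$ and the precise combination $\norm(x,y)\id - R_xL_y$ versus $\norm(x,y)\id - L_xR_y$ are forced, and getting them requires careful use of several linearized forms of $(a*b)*a=\norm(a)b$ together with the cyclic symmetry of $\langle x,y,z\rangle=\norm(x,y*z)$. In characteristic $2$ the $\tfrac12$ is unavailable, which is exactly why the hypothesis $\chr\FF\neq 2$ appears; I would flag that the identity $R_xL_y+R_yL_x=\norm(x,y)\id$ holds in all characteristics but the splitting of the symmetric part into the stated skew components needs the $\tfrac12$.
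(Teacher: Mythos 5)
Your proposal follows the paper's proof almost step for step: verify $t_{x,y}\in\tri(\cS,*,\norm)$ by a direct computation using the linearizations of $(a*b)*a=\norm(a)b=a*(b*a)$ together with the adjoint relations $L_x^*=R_x$, $R_x^*=L_x$ (in particular the identity $R_xL_y+R_yL_x=\norm(x,y)\id$, which is exactly how the paper establishes skewness of the second and third components); deduce surjectivity of $\pi_0$ from the fact that the $\sigma_{x,y}$ span $\frso(\cS,\norm)$; and obtain both the spanning statement and the bracket formula by reading everything off in the zeroth component once $\pi_0$ is known to be injective. That is the paper's argument, and your identification of the trialitarian identity for $t_{x,y}$ as the delicate computation, with the cyclic symmetry of $\norm(x,y*z)$ as the bookkeeping device, is accurate.

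The one place where your sketch is weaker than you present it is the injectivity of $\pi_0$ (which, to be fair, the paper also dispatches with ``it is not difficult to check''). The route you describe --- from $d_1(x)*y=-x*d_2(y)$, fix a non-isotropic element, invert $L_x$ or $R_y$ to write, say, $d_1=-\norm(y)^{-1}L_yR_{d_2(y)}$, and then ``derive $d_1(x)*y=0$'' --- does not close as stated: every constraint extracted this way from a single non-isotropic element (skewness of $d_1$, compatibility with $(x*y)*x=\norm(x)y$, and so on) turns out to be an automatic consequence of the linearized composition identities, because operators of the form $L_aR_c$ with $\norm(a,c)=0$ are already skew. Killing $\ker\pi_0$ needs a genuine extra input: for instance, vary the non-isotropic element and compare the resulting expressions for $d_1$; or observe that $\pi_1(\ker\pi_0)$ is an ideal of the simple Lie algebra $\frso(\cS,\norm)$ and rule out the case $\pi_1(\ker\pi_0)=\frso(\cS,\norm)$ by taking a nonzero skew map vanishing on a non-isotropic vector; or run the dimension count through the Clifford algebra isomorphism $\Phi$, as you parenthetically suggest. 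Any of these works, but the step deserves an actual argument rather than the phrase ``one isolates $d_1$ in terms of $d_2$''.
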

\begin{proof}
Let us first check that $t_{x,y}\in\tri(\cS,*,\norm)$:
\[
\begin{split}
\sigma_{x,y}(u*v)&=\norm(x,u*v)y-\norm(y,u*v)x\\[4pt]
R_xL_y(u)*v&=\bigl((y*u)*x\bigr)*v=-(v*x)*(y*u)+\norm(y*u,v)x,\\[4pt]
u*L_xR_y(v)&=u*\bigl(x*(v*y)\bigr)=-u*\bigl(y*(v*x)\bigr)+\norm(x,y)u*v\\
  &=(v*x)*(y*u)+\norm(u,v*x)y+\norm(x,y)u*v,
\end{split}
\]
and hence
\[
\sigma_{x,y}(u*v)-\Bigl(\frac{1}{2}\norm(x,y)id-R_xL_y\Bigr)(u)*v -
u*\Bigl(\frac{1}{2}\norm(x,y)id-L_xR_y\Bigr)(v)=0.
\]
Also $\sigma_{x,y}\in\frso(\cS,\norm)$ and $
\Bigl(\frac{1}{2}\norm(x,y)id-R_xL_y\Bigr)^*=\frac{1}{2}\norm(x,y)id-R_yL_x
$ (adjoint relative to the norm $\norm$), but $R_xL_x=\norm(x)id$, so
$R_xL_y+R_yL_x=\norm(x,y)id$ and hence $
\Bigl(\frac{1}{2}\norm(x,y)id-R_xL_y\Bigr)^*=-\Bigl(\frac{1}{2}\norm(x,y)id-R_xL_y\Bigr),
$ so that $\frac{1}{2}\norm(x,y)id-R_xL_y\in\frso(\cS,\norm)$, and
$\frac{1}{2}\norm(x,y)id-L_xR_y\in\frso(\cS,\norm)$ too.  Therefore,
$t_{x,y}\in\tri(\cS,*,\norm)$.

Since the Lie algebra $\frso(\cS,\norm)$ is spanned by the
$\sigma_{x,y}$'s, it is clear that  the projection $\pi_0$ is
surjective (and hence so are $\pi_1$ and $\pi_2$). It is not difficult to check that $\ker\pi_0=0$ and, therefore, $\pi_0$
is an isomorphism.

Finally the formula
$[t_{a,b},t_{x,y}]=t_{\sigma_{a,b}(x),y}+t_{x,\sigma_{a,b}(y)}$
follows from the ``same'' formula for the $\sigma$'s and the fact
that $\pi_0$ is an isomorphism.
\end{proof}

Given two symmetric composition algebras $(\cS,*,\norm)$ and $(\cS',\star,\norm')$, consider the vector space:
\[
\frg=\frg(\cS,\cS')=\bigl(\tri(\cS)\oplus\tri(\cS')\bigr)\oplus\Bigl(\oplus_{i=0}^2\iota_i(\cS\otimes \cS')\Bigr),
\]
where $\iota_i(\cS\otimes \cS')$ is just a copy of $\cS\otimes \cS'$ ($i=0,1,2$) and we write $\tri(\cS)$, $\tri(\cS')$ instead of $\tri(\cS,*,\norm)$ and $\tri(\cS',\star,\norm')$ for short. Define now an anticommutative bracket on $\frg$ by means of:

\begin{itemize}
\item the Lie bracket in $\tri(\cS)\oplus\tri(\cS')$, which thus becomes  a Lie subalgebra of $\frg$,

\item $[(d_0,d_1,d_2),\iota_i(x\otimes
 x')]=\iota_i\bigl(d_i(x)\otimes x'\bigr)$,

\item
 $[(d_0',d_1',d_2'),\iota_i(x\otimes
 x')]=\iota_i\bigl(x\otimes d_i'(x')\bigr)$,

\item $[\iota_i(x\otimes x'),\iota_{i+1}(y\otimes y')]=
 \iota_{i+2}\bigl((x* y)\otimes (x'\star y')\bigr)$ (indices modulo
 $3$),

\item $[\iota_i(x\otimes x'),\iota_i(y\otimes y')]=
 \norm'(x',y')\theta^i(t_{x,y})+
 \norm(x,y)\theta'^i(t'_{x',y'})\in\tri(\cS)\oplus\tri(\cS')$.
\end{itemize}

\begin{theorem}[{\cite{Eld04}}]
Assume $\chr\FF\neq 2,3$. With the bracket above, $\frg(\cS,\cS')$ is a Lie algebra and, if $\cS_r$ and $\cS_s'$ denote symmetric composition algebras of dimension $r$ and $s$, then the Lie algebra $\frg(\cS_r,\cS_s')$ is a (semi)simple Lie algebra whose type is given by Freudenthal's Magic Square:
$$
\vbox{\offinterlineskip
 \halign{\hfil\ $#$\quad \hfil&%
 \vreglon #%
 &\hfil\quad $#$\quad \hfil&\hfil$#$\hfil
 &\hfil\quad $#$\quad \hfil&\hfil\quad $#$\quad \hfil\cr
 \bigstrut &width 0pt&\cS_1&\cS_2&\cS_4&\cS_8\cr
 &\multispan5{\hreglonfill}\cr
 \cS_1'&&A_1&A_2&C_3&F_4\cr
 \bigstrut \cS_2'&&A_2 &A_2\oplus A_2&A_5&E_6\cr
 \bigstrut \cS_4'&&C_3&A_5 &D_6&E_7\cr
 \bigstrut \cS_8'&&F_4&E_6& E_7&E_8\cr}}
$$
\end{theorem}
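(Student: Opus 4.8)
\medskip

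The plan is to first verify that the anticommutative bracket defined on $\frg(\cS,\cS')$ satisfies the Jacobi identity — this is the bulk of the work — and then, since neither semisimplicity nor the isomorphism type of a Lie algebra changes under extension of scalars, to pass to an algebraic closure $\overline{\FF}$, where $\cS_r$ becomes the split symmetric composition algebra of dimension $r$, and to read off the type of $\frg(\cS_r,\cS_s')$ from its root system, using a dimension count as a first check. The hypothesis $\chr\FF\neq 2,3$ is used throughout: it guarantees that $\tri(\cS)$ is spanned by the elements $t_{x,y}$ of the previous theorem, and it puts the Killing form at our disposal for the semisimplicity argument.

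For the Jacobi identity, by anticommutativity it suffices to prove $[a,[b,c]]+[b,[c,a]]+[c,[a,b]]=0$ when each of $a,b,c$ lies either in $\tri(\cS)\oplus\tri(\cS')$ or in one of the summands $\iota_i(\cS\otimes\cS')$. Two automorphisms cut down the casework: the order $3$ map acting as $(d,d')\mapsto(\theta(d),\theta'(d'))$ on $\tri(\cS)\oplus\tri(\cS')$ and as $\iota_i\mapsto\iota_{i+1}$ on the rest — an automorphism precisely because the defining relations of $\tri$ and the bracket relations are $\ZZ_3$-equivariant — and the flip exchanging $\cS$ and $\cS'$, which also accounts for the symmetry of the Magic Square. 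Then: (i) three arguments in $\tri(\cS)\oplus\tri(\cS')$ is immediate, this being a Lie subalgebra; (ii) two in $\tri(\cS)\oplus\tri(\cS')$ and one in $\iota_i(\cS\otimes\cS')$ amounts to $\iota_i(\cS\otimes\cS')$ being a module for $\tri(\cS)\oplus\tri(\cS')$ via $(d,d')\mapsto d_i\otimes\id+\id\otimes d_i'$, which holds because $d\mapsto d_i$ and $d'\mapsto d_i'$ are commuting Lie homomorphisms into $\frso(\cS,\norm)$ and $\frso(\cS',\norm')$; (iii) one in $\tri(\cS)\oplus\tri(\cS')$ and two in the $\iota$'s reduces, for consecutive $\iota$-indices, to the defining identity $d_{i+2}(x*y)=d_i(x)*y+x*d_{i+1}(y)$ of $\tri$ (valid for all $i$ thanks to $\theta$) and, for equal $\iota$-indices, to $d_i\in\frso(\cS,\norm)$ together with the facts that $\tri(\cS)$ is spanned by the $t_{x,y}$ and that $[t_{a,b},t_{x,y}]=t_{\sigma_{a,b}(x),y}+t_{x,\sigma_{a,b}(y)}$. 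The hard case is that of three arguments in the $\iota$'s. If the three indices coincide, the brackets fall back into $\tri(\cS)\oplus\tri(\cS')$ via $[\iota_i(x\otimes x'),\iota_i(y\otimes y')]=\norm'(x',y')\theta^i(t_{x,y})+\norm(x,y)\theta'^i(t'_{x',y'})$, and — using that the $i$-th component of $\theta^i(t_{x,y})$ is $\sigma_{x,y}=\norm(x,\cdot)y-\norm(y,\cdot)x$ — the Jacobi sum cancels termwise from the symmetry of $\norm$ and $\norm'$ alone. If exactly two indices coincide, every bracket stays among the $\iota$'s and the identity needed is the linearization $(x*y)*z+(z*y)*x=\norm(x,z)y$ of the composition law $(x*y)*x=x*(y*x)=\norm(x)y$ of Theorem~\ref{th:symmetric_comp}(b) (together with its $\cS'$-analogue). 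If the three indices are distinct, say $0,1,2$, the inner bracket lands in an $\iota_j$ and the outer one returns to $\tri(\cS)\oplus\tri(\cS')$, producing a cyclic sum whose $\cS'$-scalar coefficients $\norm'(a',b'\star c')$ are all equal by the cyclic symmetry $\langle x,y,z\rangle=\langle y,z,x\rangle$ of $\langle x,y,z\rangle=\norm(x,y*z)$; what must then vanish is the identity $t_{a,b*c}+\theta(t_{b,c*a})+\theta^2(t_{c,a*b})=0$ in $\tri(\cS)$, and its mirror in $\tri(\cS')$, which one checks from the composition axioms and the formulas for $t_{x,y}$. This last sub-case — the only computation really mixing the data of $\cS$ and $\cS'$, and the place where the associativity of the norm $\norm(x*y,z)=\norm(x,y*z)$ is genuinely needed — is the main obstacle.

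For the identification, work over $\overline{\FF}$: then $\tri(\cS_1)=0$, $\tri(\cS_2)$ is a two-dimensional torus, $\tri(\cS_4)\cong\mathfrak{sl}_2\oplus\mathfrak{sl}_2\oplus\mathfrak{sl}_2$ has dimension $9$, and $\tri(\cS_8)\cong\frso_8$ has dimension $28$ by the Principle of Local Triality, so that $\dim\frg(\cS_r,\cS_s')=\dim\tri(\cS_r)+\dim\tri(\cS_s')+3rs$ yields the values $3,8,21,52;\;8,16,35,78;\;21,35,66,133;\;52,78,133,248$, matching the dimensions of the algebras in the square. Fix a maximal torus $\mathfrak{h}$ of $\tri(\cS_r)\oplus\tri(\cS_s')$; it is maximal in $\frg(\cS_r,\cS_s')$, and the nonzero $\mathfrak{h}$-eigenspaces are the root spaces of $\tri(\cS_r)$ and of $\tri(\cS_s')$ together with, for $i=0,1,2$, the weight spaces of $\iota_i(\cS_r\otimes\cS_s')$. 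Since the three projections of $\tri(\cS_r)$ onto $\frso(\cS_r,\norm)$ realize the natural module and the two half-spin modules (the essence of triality), the components $\iota_0,\iota_1,\iota_2$ carry, respectively, $V_r\otimes V'_s$, $\Delta^+_r\otimes\Delta^{\prime+}_s$ and $\Delta^-_r\otimes\Delta^{\prime-}_s$, where the primed symbols denote the corresponding modules of $\tri(\cS_s')$. Reading off the resulting root system in each of the sixteen entries identifies the type; the dimension is a first check, and where it alone is inconclusive one uses the module structure — for example to tell $C_3$ from $B_3$ in dimension $21$ (here $\frg(\cS_1,\cS_4')$ contains $\mathfrak{sl}_2\oplus\mathfrak{sl}_2\oplus\mathfrak{sl}_2$ together with three four-dimensional modules, which is the decomposition of $\mathfrak{sp}_6$ and not of $\mathfrak{so}_7$), to tell $E_6$ from $B_6$ and $C_6$ in dimension $78$, and to see that $\frg(\cS_2,\cS_2')$ has root system $A_2\sqcup A_2$, hence type $A_2\oplus A_2$. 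Finally $\frg(\cS_r,\cS_s')$ is semisimple: once the root system is known the Killing form is nondegenerate; alternatively, any nonzero ideal must meet a homogeneous summand, and the irreducibility of each $\iota_i(\cS_r\otimes\cS_s')$ as a $\tri(\cS_r)\oplus\tri(\cS_s')$-module, combined with $[\iota_i,\iota_{i+1}]=\iota_{i+2}$ and $[\iota_i,\iota_i]\subseteq\tri(\cS)\oplus\tri(\cS')$, forces it to be all of $\frg(\cS_r,\cS_s')$.
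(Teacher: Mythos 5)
The survey states this theorem without proof, citing \cite{Eld04}, so there is no internal argument to compare against; judged on its own, your outline is correct and is essentially the strategy of the cited source. You identify the right pressure points: the Jacobi identity reduces, via the order-$3$ automorphism permuting the $\iota_i$'s and the $\cS\leftrightarrow\cS'$ flip, to a handful of cases, and the genuinely non-formal ones are exactly the ones you single out --- the linearization $(x*y)*z+(z*y)*x=\norm(x,z)y$ of the composition law together with the explicit form of $t_{x,y}$ for two coinciding $\iota$-indices, and the identity $t_{x,y*z}+\theta(t_{y,z*x})+\theta^2(t_{z,x*y})=0$ (after pulling out the common scalar $\norm'(x',y'\star z')$ by cyclic symmetry of $\langle\cdot,\cdot,\cdot\rangle$) for three distinct indices; the latter is precisely the key lemma of \cite{Eld04}. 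Your dimension counts and the identification of the $\iota_i$'s with natural and half-spin modules are also the standard route to reading off the types. One small imprecision: over $\overline{\FF}$ there are \emph{two} non-isomorphic eight-dimensional symmetric composition algebras (split para-Cayley and split Okubo), so ``the split symmetric composition algebra of dimension $r$'' is not well defined for $r=8$; this does not affect your argument, since the identification only uses $\dim\tri(\cS_8)=28$ (local triality) and the triality module structure, which are common to both, and the independence of the outcome from this choice is in fact part of the content of the theorem.
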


Different versions of this result using Hurwitz algebras instead of symmetric composition algebras have appeared over the years (see \cite{Eld04} and the references there in). The advantage of using symmetric composition algebras is that new constructions of the exceptional simple Lie algebras are obtained, and these constructions highlights interesting symmetries due to the different triality automorphisms.

A few changes are needed for characteristic $3$. Also, quite interestingly, over fields of characteristic $3$ there are nontrivial symmetric composition \emph{superalgebras}, and these can be plugged into the previous construction to obtain an extended Freudenthal Magic Square that includes some new simple finite dimensional Lie superalgebras (see \cite{CunhaElduque06}).

\bigskip

\section{Concluding remarks}

It is impossible to give a thorough account of composition algebras in a few pages, so necessarily many things have been left out: Pfister forms and the problem of composition of quadratic forms (see \cite{Shapiro}), composition algebras over rings (or even over schemes), where Hurwitz algebras are no longer determined by their norms (see \cite{Gille}), the closely related subject of absolute valued algebras (see \cite{Angel}), ...

The interested reader may consult some excellent texts: \cite{ConwaySmith,SV00,Numbers,KMRT,Okubo_book}.
\cite{Baez} is a beautiful introduction to octonions and some of their many applications.

\smallskip

Let us finish with the first words of Okubo in his introduction to the monograph \cite{Okubo_book}:
\begin{quotation}
The saying that God is the mathematician, so that, even with meager experimental support, a mathematically beautiful theory will ultimately have a greater chance of being correct, has been attributed to Dirac. Octonion algebra may surely be called a beautiful mathematical entity. Nevertheless, it has never been systematically utilized in physics in any fundamental fashion, although some attempts have been made toward this goal. However, it is still possible that non-associative algebras (other than Lie algebras) may play some essential future role in the ultimate theory, yet to be discovered.
\end{quotation}


\end{document}